\newtheorem{theorem}{\bf Theorem}[section]
\newtheorem{definition}[theorem]{\bf Definition}
\newtheorem{remark}[theorem]{\bf Remark}
\newtheorem{lemma}[theorem]{\bf Lemma}
\newsavebox{\savepar}
\begin{document}
	\title{A weighted fractional problem involving a singular nonlinearity and an $L^1$ datum}
	\author{Akasmika Panda$^1$,  Debajyoti Choudhuri$^{2,}$\footnote{Corresponding
			author} \ and Leandro S. Tavares$^3$\\
			\small{Department of Mathematics, National Institute of Technology Rourkela, India$^{1,2}$}\\
			\small{Center of Science and Technology, Federal University of Cariri, Brazil$^{3}$}\\
			\small{Emails: akasmika44@gmail.com$^1$, dc.iit12@gmail.com$^2$, leandro.tavares@ufca.edu.br$^3$}
		}

		\date{}
		\maketitle
		\begin{abstract}
\noindent In this article, we show the existence of a unique entropy solution to the following problem:
\begin{equation}
\begin{split}
(-\Delta)_{p,\alpha}^su&= f(x)h(u)+g(x) ~\text{in}~\Omega,\\
u&>0~\text{in}~\Omega,\\
u&= 0~\text{in}~\mathbb{R}^N\setminus\Omega,\nonumber
\end{split}
\end{equation}
where the domain $\Omega\subset \mathbb{R}^N$ is bounded  and contains the origin, $ \alpha\in[0,\frac{N-ps}{2})$, $s\in (0,1)$, $2-\frac{s}{N}<p<\infty$, $sp<N$, $g\in L^1(\Omega)$, $f\in L^q(\Omega)$ for $q>1$ and $h$ is a general singular function with singularity at 0. Further, the fractional $p$-Laplacian with weight  $\alpha$ is given by 
$$(-\Delta)_{p,\alpha}^su(x)=\text{P. V.}\int_{\mathbb{R}^N}\frac{|u(x)-u(y)|^{p-2}(u(x)-u(y))}{|x-y|^{N+ps}}\frac{dy}{|x|^\alpha|y|^{\alpha}},~\forall x\in \mathbb{R}^N.$$\\
			\textbf{Keywords:} Weighted fractional Sobolev spaces, singular nonlinearity, entropy solution.\\
			\textbf{AMS Classification:}  35J60, 35R11, 35A15.
		\end{abstract}
		\section{Introduction}
We consider the following problem
		\begin{equation}\label{ent}
		\begin{split}
		(-\Delta)_{p,\alpha}^su&= f(x)h(u)+g(x) ~\text{in}~\Omega,\\
		u&>0~\text{in}~\Omega,\\
		u&= 0~\text{in}~\mathbb{R}^N\setminus\Omega,
		\end{split}
		\end{equation}
		where $\Omega$ is a bounded domain in $\mathbb{R}^N$, $s\in (0,1)$, $2-\frac{s}{N}<p<\infty$, $sp<N$,  $0\leq\alpha<\frac{N-ps}{2}$ with  $g\in L^1(\Omega)$ and $f\in L^q(\Omega)$, for some $q>1$ are two positive functions. The singularity $h : \mathbb{R}^+ \rightarrow \mathbb{R}^+$ is a non-increasing and continuous function such that
		\begin{equation}\label{eqn3}
		\underset{v\rightarrow 0^+}{\lim}~ h(v)\in (0,\infty]~ \text{and}~\underset{v\rightarrow \infty}{\lim} h(v):=h(\infty)<\infty.
		\end{equation}
Regarding the  growth conditions on $h$ near zero and infinity it will be considered that there are constants $K_1, K_2, M, N>0$ in such a way that
		\begin{equation}\label{eqq1}
	 h(v)\leq\frac{K_1}{v^\gamma}~\text{if}~v<{M},~\text{for}~	0<\gamma<1,
		\end{equation}
		\begin{equation}\label{eqqq}
	h(v)\leq\frac{K_2}{v^\theta}~\text{if}~v>N,~\text{for}~ \theta>0.
		\end{equation}
Note that in the case \eqref{eqqq} we have $h(\infty)=0$.
		 The weighted fractional $p$-Laplacian is defined as 
		 \begin{equation}\label{frac p laplace}
		 	(-\Delta)_{p,\alpha}^su(x)=\text{P
		 		V}\int_{\mathbb{R}^N}\frac{|u(x)-u(y)|^{p-2}(u(x)-u(y))}{|x-y|^{N+ps}}\frac{dy}{|x|^\alpha|y|^{\alpha}},~\forall x\in \mathbb{R}^N.
		 \end{equation} 
		This operator $(-\Delta)^s_{p,\alpha}$ is the nonlocal version of the local operator
		the operator $-\operatorname{div}(|x|^{-\alpha}|\nabla \cdot|^{p-2}\nabla \cdot)$. The Caffarelli$-$Kohn$-$Nirenberg inequality, derived in \cite{Caffarelli}, is strongly related to the problem
		\begin{equation}\label{CKN}
		-\operatorname{div}(|x|^{-\alpha}|\nabla u|^{p-2}\nabla u)=0,~\text{in}~\mathbb{R}^N
		\end{equation} 
	and each weak supersolution to the above problem $\eqref{CKN}$ satisfies the weak Harnack inequality, for every $\alpha<N-p$. Thus, in this sense, we say that $|x|^{-\alpha}$ is an admissible weight. A nonlocal version of the Caffarelli$-$Kohn$-$Nirenberg inequality, in a bounded domain, has been introduced by Abdellaoui \& Bentifour in \cite{Abdellaoui} with $\alpha=\frac{N-sp}{2}$. For the case $\Omega=\mathbb{R}^N$, the generalization of the classical Caffarelli$-$Kohn$-$Nirenberg inequality is given in \cite{Abdellaoui} for $\alpha<\frac{N-sp}{2}$ and is known as the weighted fractional Sobolev inequality. This condition on $\alpha$ is related in some sense to the admissible weight described in \cite{Heinonen}. The derivation of the weighted fractional Sobolev inequality is based on the following improved Hardy inequality, i.e. for every $u\in C_0^\infty(\Omega)$,
	\begin{align}\label{hardy}
	\int_{\mathbb{R}^{2N}}\frac{|u(x)-u(y)|^{p}}{|x-y|^{N+sp}}dxdy&-\Lambda_{N,s,p}\int_{\mathbb{R^N}}\frac{|u|^p}{|x|^{sp}}dx\nonumber\\&\geq C\int_{\mathbb{R}^{2N}}\frac{|v(x)-v(y)|^{p}}{|x-y|^{N+sp}}\frac{dxdy}{|x|^{\frac{N-sp}{2}}|y|^{\frac{N-sp}{2}}}.
	\end{align}
Here, $\Lambda_{N,s,p}$ is the best Hardy constant and $v(x)=|x|^{\frac{N-sp}{p}}u(x)$. Therefore, the class of operators like $(-\Delta)^s_{p,\alpha}$ appear naturally when one deals with the inequality \eqref{hardy}. We quote  \cite{Abdellaoui, Abdellaoui 2, Frank} for further references.\\
The main goal of this manuscriptis to obtain the existence and uniqueness of an entropy solution to $\eqref{ent}$. The concept of entropy solution is very useful when  technical difficulties arise in proving the uniqueness of the weak solution. Regarding such approach in problems involving an $L^1$ datum we point out   \cite{ Abdellaoui 1, Boccardo, Benilan} and   its references. Such notion was  considered for the first time by  Boccardo et al. \cite{Boccardo} and B\'{e}nilan et al \cite{Benilan}, where the authors considered the following problem with an $L^1$ datum $g$:
\begin{equation}\label{boccardo problem}
\begin{split}
A(u)&= g(x) ~\text{in}~\Omega,\\
u&=0~\text{in}~\Omega,
\end{split}
\end{equation} 
where $u\mapsto A(u)$ is a monotone operator acting on $W_0^{1,p}(\Omega)$ for $1<p<N$. The above mentioned problem $\eqref{boccardo problem}$ is a local sub-case of the problem $\eqref{ent}$ with $s=1, \alpha=0$ and $f=0$. Although there is a good amount of literature pertaining to the case $f\neq0$, the notion of  entropy solution  was not previously applied in the literature  to guarantee the existence and uniqueness of solution. \\
The case of measure data was considered in \cite{Maso}, where the authors have shown the existence of a renormalized solution to the following problem:
\begin{equation}
\begin{split}
-\operatorname{div}(a(x,\nabla u))&= \mu ~\text{in}~\Omega,\\
u&=0~\text{in}~\Omega.
\end{split}
\end{equation} 
 Here, $\mu$ is a bounded Radon measure and $u\mapsto -\operatorname{div}(a (x, \nabla u))$ is a monotone operator acting on $W_0^{1,p}(\Omega)$. Kuusi et al. in \cite{Kuusi}, dealt with a Dirichlet problem involving an $L^1$ datum and the fractional $p$-Laplacian. The authors used the basic and optimal nonlinear Wolff potential estimates to obtain the existence of solution in an appropriate fractional Sobolev space. An important piece of work which is worth mentioning here is due to Abdellaoui et al. in \cite{Abdellaoui 1}. The authors have considered the case $f\equiv0$ of the problem $\eqref{ent}$ and obtained a unique entropy solution using some algebraic inequalities.\\
 An improvement of the problems above was considered by Panda et al. \cite{Panda}, where the authors studied a local singular problem involving a measure datum. The problem $\eqref{ent}$ with $\alpha=0, s=1,p=2$ and with $\alpha=0, s\in (0,1), p=2$  has been analysed by Panda et al. \cite{Panda} and by Ghosh et al. \cite{Ghosh} respectively. The authors  guaranteed the existence of a weak solution as well as a very weak solution to the problem using an approximation argument. The case of purely singular problem, i.e. $\eqref{ent}$ with $g\equiv0$, involving the fractional $p$-Laplace operator was treated in \cite{Canino} for $\gamma>0$. Some equally important works in the literature on these type of problems that involves a singular term and a source term in the form of a measure can be found in \cite{pet0, pet2,pet1, pet3} and the references therein.\\
 In this manuscript, by using adapting the ideas of \cite{Abdellaoui 1, Panda}, we extend the work of Abdellaoui \cite{Abdellaoui 1} by considering a singular function $h$, having a singularity at 0, in the right hand side of the problem. Due to the irregularity near the boundary, singular problems admits solutions in a weak distributional sense, for compactly supported test functions.  See Section $\ref{notations}$ for the notations.
		\begin{theorem}\label{exist weak}
			There exists a positive nontrivial weak solution $u$  to the problem $\eqref{ent}$ in   $X_0^{t,m,\alpha}(\Omega)$, for every $1\leq m<\frac{N(p-1)}{N-s}$ and $0<t<s$, in the sense of Definition $\ref{weak}$. More precisely,  
			\begin{equation}\label{2nd 2nd}
				\int_{Q}\frac{|u(x)-u(y)|^m}{|x-y|^{N+tm}}\frac{dxdy}{|x|^\alpha|y|^\alpha}\leq C,
			\end{equation}
			for every $1\leq m<\frac{N(p-1)}{N-s}$ and every $0<t<s$. Moreover, $u\in \mathcal{T}_0^{s,p,\alpha}(\Omega)$.
		\end{theorem}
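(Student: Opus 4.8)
The plan is to construct the solution via an approximation scheme. First I would truncate the singular nonlinearity and the data: for $n\in\mathbb{N}$, replace $h$ by $h_n(u)=h(u+\tfrac1n)$ (bounded and continuous on $\mathbb{R}^+$ by \eqref{eqn3}), replace $f$ by $f_n=\min\{f,n\}$ and $g$ by $g_n=\min\{g,n\}$, and solve
\begin{equation}\label{approx-plan}
(-\Delta)_{p,\alpha}^s u_n = f_n(x)h_n(u_n)+g_n(x)\ \text{in}\ \Omega,\qquad u_n=0\ \text{in}\ \mathbb{R}^N\setminus\Omega,
\end{equation}
obtaining $u_n\in X_0^{s,p,\alpha}(\Omega)\cap L^\infty(\Omega)$ by standard monotone-operator / variational methods, since the right-hand side is now bounded. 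A comparison argument (monotonicity of $(-\Delta)^s_{p,\alpha}$) gives $u_n>0$ in $\Omega$, indeed $u_n\ge u_1 \ge c_\omega>0$ on every $\omega\Subset\Omega$, and the sequence $\{u_n\}$ is monotone increasing in $n$; this uniform-positivity-on-compact-sets is what lets us pass to the limit in the singular term $f_nh_n(u_n)\to fh(u)$ on compact subsets.

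The core of the argument is a priori estimates independent of $n$. Using $T_k(u_n)$ (the truncation at level $k$) as a test function in \eqref{approx-plan} and exploiting \eqref{eqq1}--\eqref{eqqq} to bound $\int f_n h_n(u_n)T_k(u_n)$ — splitting $\{u_n<M\}$, where $h_n(u_n)T_k(u_n)\le K_1 u_n^{1-\gamma}$ is bounded since $1-\gamma>0$, and $\{u_n\ge M\}$, where $h_n(u_n)\le h(M)<\infty$ — together with the $L^1$ bound on $g$ and Hölder on the $f$-term (using $f\in L^q$, $q>1$), yields $\int_Q \frac{|T_k(u_n)(x)-T_k(u_n)(y)|^p}{|x-y|^{N+sp}}\frac{dxdy}{|x|^\alpha|y|^\alpha}\le Ck$. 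From this Gagliardo-type energy estimate on truncations, the weighted fractional Sobolev inequality, and a Marcinkiewicz/level-set argument à la \cite{Abdellaoui 1, Boccardo}, I would derive that $|u_n|$ and $|\nabla$-type fractional difference quotients of $u_n|$ lie in suitable Marcinkiewicz spaces, and then interpolate to get, for every $1\le m<\frac{N(p-1)}{N-s}$ and $0<t<s$, the uniform bound \eqref{2nd 2nd} with $C$ independent of $n$. This is the step I expect to be the main obstacle: carrying the classical $L^1$-data estimates through the weighted measure $|x|^{-\alpha}|y|^{-\alpha}$ and the extra singular-source term requires care, since the weight is not comparable to Lebesgue measure near the origin and one must check that the Caffarelli–Kohn–Nirenberg/weighted Sobolev embedding interacts correctly with the truncation estimates.

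Granting the uniform bound \eqref{2nd 2nd}, the space $X_0^{t,m,\alpha}(\Omega)$ is reflexive, so $u_n\rightharpoonup u$ (up to a subsequence) weakly there, and by compactness of the embedding into $L^1(\Omega)$ and monotonicity in $n$ we get $u_n\to u$ a.e. and in $L^1$, with $u\ge c_\omega>0$ on each $\omega\Subset\Omega$, so $u>0$ in $\Omega$. To pass to the limit in the equation tested against $\varphi\in C_c^\infty(\Omega)$: the left side converges by an Egorov/almost-everywhere-convergence-of-gradients argument (one first upgrades a.e. convergence of the fractional gradients via the standard monotonicity trick, testing with $(u_n-T_k(u))$-type functions), the term $\int g_n\varphi\to\int g\varphi$ by dominated convergence, and $\int f_n h_n(u_n)\varphi\to\int f h(u)\varphi$ because $\varphi$ is supported in some $\omega\Subset\Omega$ where $u_n\ge c_\omega$ makes $h_n(u_n)\le h(c_\omega/2)$ eventually, giving a dominating function for $f h(u)|\varphi|\in L^1$. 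This shows $u$ is a weak solution in the sense of Definition \ref{weak}. Finally, to check $u\in\mathcal{T}_0^{s,p,\alpha}(\Omega)$, I would show $T_k(u)\in X_0^{s,p,\alpha}(\Omega)$ for each $k$ by passing to the limit in the energy estimate $\int_Q \frac{|T_k(u_n)(x)-T_k(u_n)(y)|^p}{|x-y|^{N+sp}}\frac{dxdy}{|x|^\alpha|y|^\alpha}\le Ck$ using weak lower semicontinuity, which is exactly the membership condition defining $\mathcal{T}_0^{s,p,\alpha}(\Omega)$.
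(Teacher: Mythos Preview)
Your proposal is correct and follows essentially the same route as the paper: truncate to an approximating problem, obtain $u_n\in X_0^{s,p,\alpha}(\Omega)\cap L^\infty(\Omega)$, use comparison to get monotonicity in $n$ and the uniform lower bound $u_n\ge C_\omega$ on $\omega\Subset\Omega$, test with $T_k(u_n)$ to get $\|T_k(u_n)\|_{X_0^{s,p,\alpha}}^p\le Ck$ and hence Marcinkiewicz bounds, derive the uniform $X_0^{t,m,\alpha}$ estimate, and pass to the limit termwise. The one step where the paper is more concrete than your ``interpolate from Marcinkiewicz'' sketch is the $X_0^{t,m,\alpha}$ bound itself: it tests the approximate equation with $w_n=T_1(u_n)-(u_n+1)^{-\beta}$ to obtain $\int_Q \frac{|(u_n+1)^{(p+\beta-1)/p}(x)-(u_n+1)^{(p+\beta-1)/p}(y)|^p}{(u_n(x)+1)^\beta(u_n(y)+1)^\beta}\,d\mu\le C$ and then applies \cite[Lemma~3.2]{Abdellaoui 1} to convert this into \eqref{2nd 2nd}.
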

	\noindent \noindent The weak solution space do not lie in the natural energy space corresponding to the operator $(-\Delta)_{p,\alpha}^s$, i.e. $X_0^{s,p,\alpha}(\Omega)$, but it has a lower degree of differentiability and integrability. The next theorem proves the uniqueness result.
	\begin{theorem}\label{exist entropy}
		Let $g\in L^1(\Omega)$. Then, the problem $\eqref{ent}$ admits a unique entropy solution in $\mathcal{T}_0^{s,p,\alpha}(\Omega)$ in the sense of Definition $\ref{entropy}$.
	\end{theorem}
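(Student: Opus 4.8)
The plan is to derive existence from the regularization already employed for Theorem \ref{exist weak}, and to obtain uniqueness by a nonlocal adaptation of the comparison device for entropy solutions in the spirit of \cite{Abdellaoui 1}. For the existence part, recall that the weak solution $u$ is produced as the a.e.\ limit of solutions $u_n\in X_0^{s,p,\alpha}(\Omega)$ of the regularized problems $(-\Delta)_{p,\alpha}^s u_n=f_n\,h_n(u_n)+g_n$ in $\Omega$, $u_n=0$ in $\mathbb{R}^N\setminus\Omega$, with $f_n=T_n(f)$, $g_n=T_n(g)$ and $h_n$ a bounded truncation of $h$ (e.g.\ $h_n(t)=h(t+\tfrac1n)$), and that one has at hand both the uniform lower bound $u_n\ge c_\omega>0$ on every $\omega\Subset\Omega$ and the strong convergence of the truncations $T_k(u_n)$ in the weighted fractional space. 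Given $\varphi\in X_0^{s,p,\alpha}(\Omega)\cap L^\infty(\Omega)$ and $k>0$, I would insert the admissible test function $T_k(u_n-\varphi)$ in the weak formulation of the regularized problem and let $n\to\infty$: on the right-hand side $g_nT_k(u_n-\varphi)\to gT_k(u-\varphi)$ in $L^1(\Omega)$ by dominated convergence (since $|T_k|\le k$ and $g\in L^1(\Omega)$), while $f_nh_n(u_n)T_k(u_n-\varphi)\to f\,h(u)\,T_k(u-\varphi)$ using the structural conditions \eqref{eqn3}--\eqref{eqqq} on $h$, the hypothesis $f\in L^q(\Omega)$ with $q>1$, and the a priori information on $u_n$ from the proof of Theorem \ref{exist weak}; on the left-hand side, lower semicontinuity and Fatou's lemma together with the strong convergence of $T_k(u_n)$ provide the required lower bound for the nonlocal form evaluated at $u$. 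This yields the entropy inequality of Definition \ref{entropy}, and since $u\in\mathcal{T}_0^{s,p,\alpha}(\Omega)$ by Theorem \ref{exist weak}, $u$ is an entropy solution.

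For uniqueness, let $u_1,u_2\in\mathcal{T}_0^{s,p,\alpha}(\Omega)$ be entropy solutions, so that $T_R(u_i)\in X_0^{s,p,\alpha}(\Omega)\cap L^\infty(\Omega)$ for every $R>0$. I would use $\varphi=T_R(u_2)$ in the entropy inequality for $u_1$ and $\varphi=T_R(u_1)$ in the entropy inequality for $u_2$, and add the two inequalities. The two datum terms combine into $\int_\Omega g\big(T_k(u_1-T_R(u_2))+T_k(u_2-T_R(u_1))\big)\,dx$, which tends to $\int_\Omega g\big(T_k(u_1-u_2)+T_k(u_2-u_1)\big)\,dx=0$ as $R\to\infty$, since $T_k$ is odd and $g\in L^1(\Omega)$. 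The two singular terms combine into $\int_\Omega f\big(h(u_1)T_k(u_1-T_R(u_2))+h(u_2)T_k(u_2-T_R(u_1))\big)\,dx$, which tends to $\int_\Omega f\,(h(u_1)-h(u_2))\,T_k(u_1-u_2)\,dx\le 0$, because $h$ is non-increasing while $T_k$ is odd and non-decreasing; the integrability built into Definition \ref{entropy} and dominated convergence justify this limit.

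The core is the nonlocal form. With $J(t)=|t|^{p-2}t$, $D_i(x,y)=u_i(x)-u_i(y)$, $w=u_1-u_2$, $\psi_1=T_k(u_1-T_R(u_2))$ and $\psi_2=T_k(u_2-T_R(u_1))$, the sum of the two operator contributions is
\[\tfrac12\int_Q\frac{J(D_1(x,y))\big(\psi_1(x)-\psi_1(y)\big)+J(D_2(x,y))\big(\psi_2(x)-\psi_2(y)\big)}{|x-y|^{N+sp}\,|x|^\alpha|y|^\alpha}\,dx\,dy,\]
and since $\psi_1\to T_k(w)$ and $\psi_2\to -T_k(w)$ pointwise as $R\to\infty$, this tends, at least formally, to
\[\tfrac12\int_Q\frac{\big(J(D_1(x,y))-J(D_2(x,y))\big)\big(T_k(w(x))-T_k(w(y))\big)}{|x-y|^{N+sp}\,|x|^\alpha|y|^\alpha}\,dx\,dy\ \ge\ 0,\]
the nonnegativity following because $J$ and $T_k$ are non-decreasing and $D_1(x,y)-D_2(x,y)=w(x)-w(y)$, so that $J(D_1)-J(D_2)$ and $T_k(w(x))-T_k(w(y))$ share the sign of $w(x)-w(y)$. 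Combining the three limits, this nonnegative quantity is $\le 0$, hence $0$, so its integrand vanishes a.e.\ on $Q$, forcing $T_k(w(x))=T_k(w(y))$ for a.e.\ $(x,y)$; evaluating this for $y$ outside $\Omega$ (where $w=0$) gives $T_k(w)=0$ a.e.\ in $\Omega$ for some, hence every, $k>0$, that is $u_1=u_2$ a.e.\ in $\Omega$.

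The step I expect to be the main obstacle is making rigorous the passage $R\to\infty$ in the nonlocal form. In the local case one uses the pointwise identity $\nabla T_R(u_i)=\nabla u_i\,\chi_{\{|u_i|<R\}}$ and dominated convergence, whereas here $T_R(u_i)$ couples the regions $\{|u_i|\le R\}$ and $\{|u_i|>R\}$ through the singular kernel, so one must split $Q$, apply Fatou's lemma to the part yielding the limiting nonnegative form, and estimate the remaining tail interactions using the finiteness of $\int_Q\frac{|T_R(u_i)(x)-T_R(u_i)(y)|^p}{|x-y|^{N+sp}}\frac{dxdy}{|x|^\alpha|y|^\alpha}$ (which holds because $u_i\in\mathcal{T}_0^{s,p,\alpha}(\Omega)$) together with the algebraic inequalities of \cite{Abdellaoui 1}. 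Showing that these tail contributions vanish as $R\to\infty$, uniformly in $k$, is the heaviest technical point; once it is settled, the monotonicity of the nonlocal form, the sign of the singular term, and the antisymmetric cancellation of the datum terms deliver $u_1\equiv u_2$.
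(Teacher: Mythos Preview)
Your existence argument follows the paper's route closely (test the regularized problem with $T_k(u_n-\varphi)$, pass to the limit using Fatou on the operator side and dominated convergence on the data side, relying on the strong convergence $T_k(u_n)\to T_k(u)$ in $X_0^{s,p,\alpha}(\Omega)$). However, you omit the verification of condition \eqref{condition 1} in Definition~\ref{entropy}, namely that $\int_{D_l}|u(x)-u(y)|^{p-1}d\mu\to 0$ as $l\to\infty$. The paper proves this separately by testing \eqref{weak formula approx} with $\phi=T_1(G_l(u_n))$, observing that on $D_l$ one has $|u_n(x)-u_n(y)|^{p-1}\le I_n(x,y)\big(T_1(G_l(u_n))(x)-T_1(G_l(u_n))(y)\big)$, and then using Fatou and the fact that $\int_{\{u\ge l\}}(fh(l)+g)\to 0$. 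This condition is not merely cosmetic: it is precisely what controls the tail interactions you yourself flag as the main obstacle in the uniqueness step, and the results you intend to borrow from \cite{Abdellaoui 1} rely on it.

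For uniqueness, the paper takes a somewhat different route from your symmetric ``cross-test two arbitrary entropy solutions'' argument. It fixes $u$ as the entropy solution \emph{built from the approximation} and lets $\bar u$ be an arbitrary one; it then first proves the one-sided bound $u\le\bar u$ by testing both the approximate weak formulation for $u_n$ and (via Lemma~\ref{another weak form}) the equation for $\bar u$ with $\psi_n=(u_n-\bar u)^+$, using that $\psi_n\in X_0^{s,p,\alpha}(\Omega)\cap L^\infty(\Omega)$ vanishes on $\{\bar u>\|u_n\|_{L^\infty}\}$. Only after securing $u\le\bar u$ does it perform the cross-testing with $\varphi=T_h(\bar u)$ and $\varphi=T_h(u)$. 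The payoff of this pre-comparison is twofold: the sign of the singular contribution $\int_\Omega f(h(u)-h(\bar u))T_k(u-\bar u)$ is immediate, and, more importantly, the inclusion $\bar D(h)\subset D(h)$ simplifies the splitting of $Q$ so that the remainder terms $J_2(h)+J_3(h)+J_4(h)$ can be estimated directly by citing \cite{Abdellaoui 1} (which in turn uses condition \eqref{condition 1}). Your direct symmetric approach is in principle viable, but without the ordering $u_1\le u_2$ the decomposition of $Q$ into regions where $T_R(u_i)=u_i$ or $T_R(u_i)=R$ is messier, and you would have to reprove (rather than quote) the vanishing of the tail pieces; the paper's extra comparison step buys exactly the structure that makes those estimates clean.
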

\noindent The paper is organized as follows: In Section $\ref{notations}$, we introduce the weighted fractional Sobolev space which is the natural solution space to the given problem $\eqref{ent}$. Further, we also define some useful function spaces and provide some auxiliary results. The existence of a positive weak solution to the problem $\eqref{ent}$ is proved in Section $\ref{section weak}$ by an approximation argument and using some apriori estimates. Section $\ref{section entropy}$ is all about showing the existence and uniqueness of a positive entropy solution to $\eqref{ent}$. In this process, we show the equivalence between the weak solution and  the entropy solution to $\eqref{ent}$. Additionally, we prove the existence of an entropy solution to the problem $\eqref{ent}$ with $h(u)=\frac{1}{u^\gamma}$ and with a general function $g(x,w)=w^r+g(x)$ for $r<p-1$.
\section{Properties of weighted fractional Sobolev space and some preliminary definitions \& results}\label{notations}
We now introduce the weighted fractional Sobolev space which is a natural solution space to the problem $\eqref{ent}$. Let us consider $\Omega$ to be a bounded domain in $\mathbb{R}^N$, $s\in (0,1)$, $1<p<\infty$ and $\alpha\in[0,\frac{N-sp}{2})$. Denote
$$d\nu=\frac{dx}{|x|^{2\alpha}},~~d\mu=\frac{dxdy}{|x-y|^{N+sp}|x|^\alpha|y|^\alpha}.$$ 
The fractional Sobolev space with weight $\alpha$ (refer \cite{Abdellaoui}) is defined by
$$W^{s,p,\alpha}(\mathbb{R}^N)=\left\{u\in L^p(\mathbb{R}^N,d\nu):\iint_{\mathbb{R}^{2N}}|u(x)-u(y)|^pd\mu<\infty\right\}.$$ 
Further, denote $Q=\mathbb{R}^{2N}\setminus(\Omega^c\times\Omega^c)$ with $\Omega^c=\mathbb{R}^N\setminus\Omega$. Then, the space $X^{s,p,\alpha}(\Omega)$ is defined by
\begin{align}
& X^{s,p,\alpha}(\Omega)\nonumber\\&
=\left\{u:\mathbb{R}^N\rightarrow\mathbb{R}~\text{is measurable},u|_{\Omega}\in L^p(\Omega,d\nu) ~\text{and}~\int_Q\frac{|u(x)-u(y)|^p}{|x-y|^{N+sp}}\frac{dxdy}{|x|^\alpha|y|^\alpha}<\infty\right\}\nonumber
\end{align} 
and this space is a Banach space endowed with the norm:
$$\|u\|_{X^{s,p,\alpha}(\Omega)}=\|u\|_{L^p(\Omega,d\nu)}+\left(\int_Q\frac{|u(x)-u(y)|^p}{|x-y|^{N+sp}}\frac{dxdy}{|x|^\alpha|y|^\alpha}\right)^{\frac{1}{p}}.$$
Define a space $X_0^{s,p,\alpha}(\Omega)$ by  
$$X_0^{s,p,\alpha}(\Omega)=\{u\in X^{s,p,\alpha}(\Omega): u=0 \text{ in }\mathbb{R}^N\setminus\Omega\},$$ which is a Banach space when endowed with the following norm: $$\|u\|_{X_0^{s,p,\alpha}(\Omega)}^p=\int_{Q}\frac{|u(x)-u(y)|^p}{|x-y|^{N+sp}}\frac{dxdy}{|x|^\alpha|y|^\alpha}.$$
If $\alpha=0$, denote $W^{s,p,\alpha}(\mathbb{R}^N)=W^{s,p}(\mathbb{R}^N),~X^{s,p,\alpha}(\Omega)=X^{s,p}(\Omega),~X_0^{s,p,\alpha}(\Omega)=X_0^{s,p}(\Omega)$ which are the usual well-known fractional Sobolev spaces. Since the domain $\Omega$ is bounded, the embedding $L^1(\Omega,d\nu)\hookrightarrow L^1(\Omega) $ is continuous and we will use this embedding throughout the article.\\
 The following is the weighted fractional Sobolev inequality which is an improvement of the fractional Sobolev inequality and is obtained by the help of weighted Hardy inequality (see \eqref{hardy}).
\begin{theorem}[\cite{Abdellaoui}, Theorem 1.4, Theorem 1.5]\label{weighted sobo ineq}
	Let $s\in (0,1)$ and $1<p<\frac{N}{s}$. If $\alpha<\frac{N-sp}{2}$, then there exists constant $S=S(s,N,p,\alpha)>0$ such that
	$$\iint_{\mathbb{R}^{2N}}|u(x)-u(y)|^{p}d\mu\geq S\left(\int_{\mathbb{R}^N}\frac{|u(x)|^{p_s^*}}{|x|^{\frac{2\alpha p_s^*}{p}}}\right)^{\frac{p}{p_s^*}}$$
	for every $u\in C_c^\infty(\mathbb{R}^N)$. Here, the constant $p_s^*=\frac{Np}{N-sp}$ is the fractional critical Sobolev exponent.
\end{theorem}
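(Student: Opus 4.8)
The statement is scale invariant: under $u\mapsto u(\lambda\,\cdot)$ both sides carry the factor $\lambda^{sp-N+2\alpha}$, which both forces the exponent $p_s^*$ together with the weight $|x|^{-2\alpha p_s^*/p}$ and rules out any crude termwise comparison. The plan is to transfer the weighted critical norm into an \emph{unweighted} one by the ground-state substitution $v(x)=|x|^{-2\alpha/p}u(x)$, equivalently $u(x)=|x|^{2\alpha/p}v(x)$. Writing $a:=2\alpha/p$, one has the exact identity $\int_{\mathbb{R}^N}|u|^{p_s^*}|x|^{-2\alpha p_s^*/p}\,dx=\int_{\mathbb{R}^N}|v|^{p_s^*}\,dx$, so the right-hand side of the asserted inequality becomes $S\,\norm{v}_{L^{p_s^*}(\mathbb{R}^N)}^p$. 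Since $u\in C_c^\infty$, $v$ is smooth away from the origin with the correct decay, and a truncation argument near $\{x=0\}$, using $\alpha<\frac{N-sp}{2}$ to keep $v\in L^{p_s^*}$ and the seminorms finite, reduces the problem to admissible $v$.

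First I would rewrite the left-hand side in terms of $v$. Using $\alpha=ap/2$ one gets the pointwise identity
$$\frac{|u(x)-u(y)|^p}{|x|^\alpha|y|^\alpha}=\Big|\,t\,v(x)-t^{-1}v(y)\,\Big|^p,\qquad t=t(x,y):=\Big(\tfrac{|x|}{|y|}\Big)^{a/2},$$
so the weighted Gagliardo seminorm of $u$ equals $\iint_{\mathbb{R}^{2N}}|t\,v(x)-t^{-1}v(y)|^p\,|x-y|^{-(N+sp)}\,dxdy$. The core is then an \emph{algebraic inequality}: for real $A,B$ and $t>0$ there is $c_0>0$ with
$$|tA-t^{-1}B|^p\ \ge\ c_0\,|A-B|^p\ -\ \Psi(t)\,\big(|A|^p+|B|^p\big),$$
where $\Psi(t)\to 0$ as $t\to 1$ and $\Psi$ is controlled by $|t-1|^p$. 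A nonnegative remainder is genuinely unavoidable here, since $|tA-t^{-1}B|$ can vanish while $|A-B|$ does not (take $A=1,\ B=t^2$). Integrating this inequality, the first term yields $c_0\,[v]_{W^{s,p}(\mathbb{R}^N)}^p$, while the remainder, after using $|t-1|^p\lesssim \big||x|^{a/2}-|y|^{a/2}\big|^p(|x||y|)^{-ap/4}$ and the homogeneity of the kernel, integrates to a Hardy-type quantity comparable to $\int_{\mathbb{R}^N}|v|^p|x|^{-sp}\,dx$.

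At this stage I would invoke the fractional Hardy inequality with sharp constant $\Lambda_{N,s,p}$, exactly as in the improved Hardy inequality \eqref{hardy}, to bound the Hardy remainder by a fraction of $[v]_{W^{s,p}(\mathbb{R}^N)}^p$. The restriction $0\le\alpha<\frac{N-sp}{2}$ is precisely what makes the integrated coefficient of the remainder \emph{strictly subcritical}, i.e. strictly smaller than the constant available from the Hardy inequality, so that the remainder can be absorbed and one is left with $\iint_{\mathbb{R}^{2N}}|t\,v(x)-t^{-1}v(y)|^p|x-y|^{-(N+sp)}\,dxdy\ge c\,[v]_{W^{s,p}(\mathbb{R}^N)}^p$ for some $c>0$. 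Finally, applying the classical (unweighted, $\alpha=0$) fractional Sobolev inequality $[v]_{W^{s,p}(\mathbb{R}^N)}^p\ge S_0\,\norm{v}_{L^{p_s^*}}^p$ and unwinding the substitution gives the assertion with $S=cS_0$.

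The main obstacle is the absorption step: one must establish the algebraic inequality with a remainder whose integrated coefficient is sharp enough to lie strictly below $\Lambda_{N,s,p}$ under the hypothesis $\alpha<\frac{N-sp}{2}$. Producing a remainder that is simultaneously pointwise valid and, after integration, exactly of Hardy type (so that \eqref{hardy} applies) is the delicate computation, and the endpoint $\alpha=\frac{N-sp}{2}$ is critical and must be treated by a limiting/separate argument, which accounts for the two results quoted from \cite{Abdellaoui}.
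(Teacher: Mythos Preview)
The paper does not prove this theorem at all: it is quoted verbatim from \cite{Abdellaoui} (Theorems~1.4 and~1.5 there) and stated without argument as a tool for later use. There is therefore no ``paper's own proof'' to compare your proposal against.

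As for the proposal itself, the strategy you outline---the ground-state substitution $u=|x|^{2\alpha/p}v$ to unweight the $L^{p_s^*}$ norm, a pointwise algebraic inequality producing $c_0[v]_{W^{s,p}}^p$ minus a Hardy-type remainder, absorption of the remainder via the fractional Hardy inequality, and then the classical Sobolev inequality---is indeed the skeleton of the argument in \cite{Abdellaoui}. Your identification of the absorption step as the crux is correct: the nontrivial work in \cite{Abdellaoui} is exactly to show that the coefficient arising from the remainder is strictly below what Hardy provides when $\alpha<\tfrac{N-sp}{2}$, and this is done there not by a crude algebraic inequality of the form you wrote but through a more careful expansion that tracks the constant precisely. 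Your version of the algebraic inequality (with an unspecified $c_0$ and $\Psi$) would not by itself close the argument, since one needs the integrated remainder coefficient to be \emph{exactly} subcritical, not merely finite; this is the gap you yourself flag at the end. So the plan is right in spirit but the decisive quantitative step is only asserted, not supplied.
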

\noindent The next theorem is known as the fractional Caffarelli$-$Kohn$-$Nirenberg inequality in a bounded domain, proved in \cite{Abdellaoui}.
\begin{theorem}
	 If $\Omega$ is a bounded domain in $\mathbb{R}^N$ and $\alpha=\frac{N-sp}{2}$, then for every $q<p$ there exists constant $C=C(\Omega, s, q,N)>0$ such that
	 $$\iint_{\mathbb{R}^{2N}}|u(x)-u(y)|^{p}d\mu\geq
	 C\left(\int_{\mathbb{R}^N}\frac{|u(x)|^{p_{q,s}^*}}{|x|^{\frac{2\alpha p_{q,s}^*}{p}}}\right)^{\frac{p}{p_{q,s}^*}}$$
	 for every $u\in C_c^\infty(\mathbb{R}^N)$, where $p^*_{q,s}=\frac{Np}{N-sq}$.
\end{theorem}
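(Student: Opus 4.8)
The inequality is invariant under the natural scaling, so no gain can come from scaling alone; what drives the estimate is the strict gap $q<p$, equivalently $p<p_{q,s}^*<p_s^*$, combined with the boundedness of $\Omega$. I would run the dyadic annular decomposition that is standard for Caffarelli--Kohn--Nirenberg estimates. Since $|u(x)-u(y)|\ge\big||u(x)|-|u(y)|\big|$ while the right-hand side depends only on $|u|$, I may assume $u\ge0$. With $\alpha=\frac{N-sp}{2}$, so $2\alpha=N-sp$, write $\beta:=\frac{2\alpha p_{q,s}^*}{p}$ for the weight exponent on the right. Decompose $\mathbb{R}^N\setminus\{0\}=\bigcup_{j\in\mathbb{Z}}A_j$ with $A_j=\{2^j\le|x|<2^{j+1}\}$; on $A_j$ one has $|x|^{-\alpha}\simeq2^{-j\alpha}$, $|x|^{-\beta}\simeq2^{-j\beta}$ and $|A_j|\simeq2^{jN}$. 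As $\Omega$ is bounded, $u$ is supported in some ball $B_R$, so only annuli with $j\le J:=\log_2R$ meet $\operatorname{supp}u$.

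For the lower bound I discard every off-diagonal block $A_j\times A_k$ with $j\ne k$, all of which contribute nonnegatively, and keep only the diagonal:
$$\iint_{\mathbb{R}^{2N}}|u(x)-u(y)|^p\,d\mu\ \ge\ \sum_{j\le J}\iint_{A_j\times A_j}\frac{|u(x)-u(y)|^p}{|x-y|^{N+sp}}\,\frac{dx\,dy}{|x|^\alpha|y|^\alpha}\ \simeq\ \sum_{j\le J}2^{-2j\alpha}\iint_{A_j\times A_j}\frac{|u(x)-u(y)|^p}{|x-y|^{N+sp}}\,dx\,dy.$$
On each annulus I rescale $A_j$ to the fixed unit annulus $A_0=\{1\le|x|<2\}$ and apply the scale-invariant unweighted fractional Poincar\'e--Sobolev inequality there; the factor $2^{j(N-sp)}$ produced by the seminorm exactly cancels the factor $2^{jNp/p_s^*}=2^{j(N-sp)}$ coming from the $L^{p_s^*}$ norm, so that
$$\iint_{A_j\times A_j}\frac{|u(x)-u(y)|^p}{|x-y|^{N+sp}}\,dx\,dy\ \gtrsim\ \Big(\int_{A_j}|u-\bar u_{A_j}|^{p_s^*}\,dx\Big)^{p/p_s^*},$$
with $\bar u_{A_j}$ the average of $u$ over $A_j$ and a constant independent of $j$.

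Pretending for the moment that the averages $\bar u_{A_j}$ may be dropped, H\"older on the finite-measure set $A_j$ (with exponent $p_s^*/p_{q,s}^*>1$) lowers the exponent from $p_s^*$ to $p_{q,s}^*$. Collecting the weight $2^{-j\beta}$ and the measure factor $2^{jN(1-p_{q,s}^*/p_s^*)}$ and using $\beta p/p_{q,s}^*=2\alpha$, each annular piece of the right-hand side satisfies
$$\Big(\int_{A_j}\frac{|u|^{p_{q,s}^*}}{|x|^\beta}\,dx\Big)^{p/p_{q,s}^*}\ \le\ C\,2^{j\delta}\,2^{-2j\alpha}\Big(\int_{A_j}|u|^{p_s^*}\,dx\Big)^{p/p_s^*},\qquad \delta:=\frac{Np}{p_{q,s}^*}\Big(1-\frac{p_{q,s}^*}{p_s^*}\Big),$$
where $\delta>0$ precisely because $q<p$. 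Since $j\le J$ I may bound $2^{j\delta}\le R^\delta$ uniformly, and since $p_{q,s}^*>p$ the map $t\mapsto t^{p/p_{q,s}^*}$ is subadditive, so that summing over $j$ yields
$$\Big(\int_{\mathbb{R}^N}\frac{|u|^{p_{q,s}^*}}{|x|^\beta}\,dx\Big)^{p/p_{q,s}^*}\le\sum_{j\le J}\Big(\int_{A_j}\frac{|u|^{p_{q,s}^*}}{|x|^\beta}\,dx\Big)^{p/p_{q,s}^*}\le C R^\delta\sum_{j\le J}2^{-2j\alpha}\Big(\int_{A_j}|u|^{p_s^*}\Big)^{p/p_s^*}\le C R^\delta\iint_{\mathbb{R}^{2N}}|u(x)-u(y)|^p\,d\mu,$$
which is the claim with $C=C(\Omega,s,q,N)$.

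The genuine difficulty is the parenthetical ``pretending the averages may be dropped''. The energy on $A_j\times A_j$ controls only $u-\bar u_{A_j}$, so the constants $\bar u_{A_j}$ must be recovered. I expect to do this by reinstating the nearest-neighbour blocks $A_j\times A_{j+1}$, which were discarded above and which bound $|\bar u_{A_{j+1}}-\bar u_{A_j}|$, and then running a discrete Hardy / telescoping argument anchored at $j=J+1$, where $u\equiv0$ forces $\bar u=0$; the same subcritical surplus $\delta>0$ makes the telescoped series summable. A secondary technical point is to confirm that $\iint_{\mathbb{R}^{2N}}|u(x)-u(y)|^p\,d\mu<\infty$ for $u\in C_c^\infty$ at the critical weight, which follows near the origin from the smoothness of $u$ and $sp>0$ by a direct estimate on the blocks $A_j\times A_j$ as $j\to-\infty$.
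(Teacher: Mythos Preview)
The paper does not prove this theorem. It is quoted, with the attribution ``proved in \cite{Abdellaoui}'', as background material; no argument is given in the paper itself, so there is nothing here to compare your proposal against.

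For context, the route indicated by the paper's introduction (and by \cite{Abdellaoui}) is not your dyadic decomposition but rather the ground-state substitution $v(x)=|x|^{(N-sp)/p}u(x)$ tied to the improved Hardy inequality~\eqref{hardy}; the critical-weight seminorm of $u$ is rewritten in terms of $v$, and the estimate is then reduced to an unweighted Sobolev-type inequality for $v$ on the bounded set, with the drop from $p_s^*$ to $p_{q,s}^*$ absorbing the mismatch. Your annular method is a genuinely different, and in principle more elementary, approach.

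That said, the gap you flag is not cosmetic. The diagonal blocks control only $u-\bar u_{A_j}$, and the ``averages'' contribution you must absorb is, after the volume factors cancel (note $|A_j|^{p/p_s^*}\simeq 2^{j(N-sp)}=2^{2j\alpha}$ exactly at the critical weight), essentially $\sum_{j\le J}|\bar u_{A_j}|^p$, which \emph{diverges} whenever $u(0)\neq0$. In particular, you cannot first bound $2^{j\delta}\le R^\delta$ uniformly and then deal with the averages, as your main display does: the decay $2^{j\delta}$ as $j\to-\infty$ is precisely what is needed to make the averages summable. The correct order is to keep the factor $2^{j\delta}$, run the telescoping $\bar u_{A_j}=-\sum_{k=j}^{J}(\bar u_{A_{k+1}}-\bar u_{A_k})$ anchored at $\bar u_{A_{J+1}}=0$, bound each increment by the off-diagonal block $A_k\times A_{k+1}$, and close with a weighted discrete Hardy inequality whose validity hinges on $\delta=s(p-q)>0$. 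This is doable, but it is the whole proof; until it is written out your argument is an outline. (A minor point: the theorem as printed takes $u\in C_c^\infty(\mathbb{R}^N)$ while the constant depends on $\Omega$; you have tacitly, and reasonably, read this as $u\in C_c^\infty(\Omega)$.)
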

	\noindent The truncation function $T_k:\mathbb{R}\rightarrow \mathbb{R}$, for a fixed $k>0$, is defined  by
	\begin{eqnarray}
	T_k(s)=\max\{-k,\min\{k,s\}\}\nonumber
	\end{eqnarray}
	and define $$G_k(v)=(|v|-k)^+sign(v).$$
	Clearly, $T_k(v)+G_k(v)=v$ for every $v\in \mathbb{R}$.
	\begin{definition}
		We define $\mathcal{T}^{s,p,\alpha}_0(\Omega)$ to be the class of measurable functions $u$ on $\Omega$ such that $T_k(u)\in X^{s,p,\alpha}_0(\Omega)$ for all $k>0$.
	\end{definition}
	\noindent 	The following well-known algebraic inequalities will be frequently used throughout this article.
	\begin{remark}\label{inequalities}
		Let $p\geq 1$,  $\beta>0$ and $a_1,a_2\in [0,\infty]$. Then, there exist constants $c_1,c_2,c_3>0$ such that 
		\begin{enumerate}
			\item $(a_1+a_2)^\beta\leq c_1 a_1^\beta+c_2 a_2^\beta$,
			\item $|a_1-a_2|^{p-2}(a_1-a_2)(a_1^\beta-a_2^\beta)\geq c_3 |a_1^{\frac{\beta+p-1}{p}}-a_2^{\frac{\beta+p-1}{p}}|^p$,
			\item $|a_1+a_2|^{\beta-1}|a_1-a_2|^p\leq c_3 |a_1^{\frac{\beta+p-1}{p}}-a_2^{\frac{\beta+p-1}{p}}|^p$ for $\beta\geq 1$,
			\item for $a_1,a_2\in \mathbb{R}$,  $$|a_1-a_2|^{p-2}(a_1-a_2)(T_k(a_1)-T_k(a_2))\geq |T_k(a_1)-T_k(a_2)|^p,$$
			$$|a_1-a_2|^{p-2}(a_1-a_2)(G_k(a_1)-G_k(a_2))\geq |G_k(a_1)-G_k(a_2)|^p,$$
			\item $\left(a_1^{p-2}a_1-a_2^{p-2}a_2\right)\cdot (a_1-a_2)\geq 2^{2-p}|a_1-a_2|^p$, when $p\geq 2,$
			\item $\left(a_1^{p-2}a_1-a_2^{p-2}a_2\right)\cdot (a_1-a_2)\geq \frac{1}{2} \left(|a_1|+|a_2|\right)^{p-2}|a_1-a_2|^2$, when $p<2$.
		\end{enumerate}
	\end{remark}
\begin{definition}[\cite{Abdellaoui 1}]\label{weighted marcin}
	A measurable function $u:\Omega\rightarrow\mathbb{R}$ is said to be in the weighted Marcinkiewicz space, denoted by $M^q(\Omega,d\nu)$ for $0<q<\infty$, if 
	$$\nu(\{x\in\Omega:|u(x)|>t\})\leq \frac{C}{t^q},\text{ for }~t>0 \text{ and } 0<C<\infty.$$
\end{definition}
\noindent Assume $\Omega\subset\mathbb{R}^N$ to be bounded, then
	\begin{enumerate}
		\item $M^{q_1}(\Omega,d\nu)\subset M^{q_2}(\Omega,d\nu)$, $\forall q_1\geq q_2>0$,
		\item for $1\leq q<\infty$ and $0<\epsilon<q-1$, we have the following continuous embedding
		\begin{equation}\label{continuous}
		L^q(\Omega,d\nu)\subset 	M^q(\Omega,d\nu)\subset 	L^{q-\epsilon}(\Omega,d\nu).
		\end{equation}
	\end{enumerate}
\noindent Abdellaoui \& Bentifour \cite{Abdellaoui} provided a useful comparison principle which is stated in the following lemma.
\begin{lemma}\label{comparison}
	Assume that $\Omega\subset\mathbb{R^N}$ is a bounded domain, $p
	\geq 1$, $H(\cdot,\cdot)$ is a positive continuous function and $\frac{H(x,v)}{v^{p-1}}$ is decreasing for $v>0$. Furthermore, let $v,w\in X^{s,p,\alpha}_0(\Omega)$ satisfy 
	\begin{eqnarray}
	\begin{split}
	&\begin{cases}
	(-\Delta)^s_{p,\alpha}v\geq H(x,v)&\text{in}~ \Omega,\\ 
	(-\Delta)^s_{p,\alpha}w\leq H(x,w)&\text{in}~ \Omega,\nonumber \\
	v,w>0 & \text{in}~ \Omega.\nonumber
	\end{cases}
	\end{split}
	\end{eqnarray}
	Then, $w\leq v$ in $\Omega$.
\end{lemma}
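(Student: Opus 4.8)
The plan is to run a nonlocal Díaz--Saá / Picone-type argument driven by the monotonicity of $v\mapsto H(x,v)/v^{p-1}$: I test the two differential inequalities against a cleverly chosen pair of singular quotients and compare. Throughout set $\Phi_p(t)=|t|^{p-2}t$, and recall that with the measure $d\mu=\frac{dx\,dy}{|x-y|^{N+sp}|x|^\alpha|y|^\alpha}$ the two hypotheses read, for every admissible $\varphi\geq 0$,
$$\int_Q \Phi_p(v(x)-v(y))(\varphi(x)-\varphi(y))\,d\mu \geq \int_\Omega H(x,v)\varphi\,dx,$$
together with the reverse ($\leq$) inequality obtained by replacing $v$ by $w$. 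Put $\Omega^+=\{x\in\Omega:w(x)>v(x)\}$ and $\psi=(w^p-v^p)^+$; the goal is to show that $\Omega^+$ is negligible.

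The decisive choice is to test the $v$-inequality with $\varphi_1=\psi/v^{p-1}$ and the $w$-inequality with $\varphi_2=\psi/w^{p-1}$, both nonnegative and supported in $\overline{\Omega^+}$. Subtracting the $w$-inequality from the $v$-inequality, on the right-hand side the two source terms combine into
$$\int_{\Omega^+}\left(\frac{H(x,v)}{v^{p-1}}-\frac{H(x,w)}{w^{p-1}}\right)(w^p-v^p)\,dx \geq 0,$$
where nonnegativity holds because on $\Omega^+$ we have $0<v<w$, so the monotonicity of $t\mapsto H(x,t)/t^{p-1}$ makes the first factor $\geq 0$ while $(w^p-v^p)>0$. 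Consequently the difference of the left-hand (energy) terms is bounded below by a nonnegative quantity, i.e. it is $\geq 0$.

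The core of the proof is to show that this same energy difference is in fact $\leq 0$, which pins it to zero. This rests on the pointwise nonlocal Díaz--Saá inequality
$$\Phi_p(v(x)-v(y))\left(\frac{\psi(x)}{v^{p-1}(x)}-\frac{\psi(y)}{v^{p-1}(y)}\right)-\Phi_p(w(x)-w(y))\left(\frac{\psi(x)}{w^{p-1}(x)}-\frac{\psi(y)}{w^{p-1}(y)}\right)\leq 0,$$
valid for $v,w>0$, which is the discrete manifestation of the convexity of the Gagliardo energy along the curve $\sigma\mapsto\big((1-\sigma)v^p+\sigma w^p\big)^{1/p}$ (equivalently, it follows from the fractional Picone inequality). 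Since $d\mu$ is a positive measure, integrating this bound gives that the energy difference is $\leq 0$. Chaining the estimates yields $0\leq(\text{source difference})\leq(\text{energy difference})\leq 0$, so all three vanish. If $t\mapsto H(x,t)/t^{p-1}$ is strictly decreasing, the vanishing of the source integral already forces $|\Omega^+|=0$; in the merely decreasing case one invokes the equality case of the Picone inequality, which forces $v$ and $w$ to be proportional on each connected component, and then the common exterior condition $v=w=0$ on $\mathbb{R}^N\setminus\Omega$ with interior positivity gives $\psi\equiv 0$. Either way $w\leq v$ in $\Omega$.

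I expect the main obstacle to be twofold. First, the admissibility of the singular test functions $\varphi_1,\varphi_2$ in $X_0^{s,p,\alpha}(\Omega)$ must be justified: one needs control of $\psi/v^{p-1}$ near the origin (because of the weight $|x|^{-\alpha}$) and near $\partial\Omega$, and the clean route is to prove the inequality first for the shifted functions $v_\varepsilon=v+\varepsilon$, $w_\varepsilon=w+\varepsilon$, where the quotients are bounded, and then let $\varepsilon\to 0^+$ using monotone/dominated convergence. Second, one must verify that the weight $\frac{1}{|x|^\alpha|y|^\alpha}$ does not disrupt the pointwise Díaz--Saá inequality; fortunately the weight is a nonnegative multiplicative factor that is symmetric in $x,y$ and can be pulled out of the pointwise scalar estimate, so the algebraic inequality needed is exactly the unweighted one and only the integration step carries the weight.
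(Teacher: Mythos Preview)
The paper does not actually prove this lemma: it is stated as a citation of Abdellaoui--Bentifour \cite{Abdellaoui} (``Abdellaoui \& Bentifour provided a useful comparison principle which is stated in the following lemma''), with no argument given. So there is no in-paper proof to compare against.

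Your approach is the standard D\'iaz--Sa\'a/Picone route and is essentially what one finds in the cited reference: test the supersolution inequality with $(w^p-v^p)^+/v^{p-1}$, the subsolution inequality with $(w^p-v^p)^+/w^{p-1}$, use the discrete Picone inequality to make the energy difference $\le 0$, and use the monotonicity of $H(x,t)/t^{p-1}$ to make the source difference $\ge 0$. Your observation that the weight $|x|^{-\alpha}|y|^{-\alpha}$ is a nonnegative symmetric factor that simply rides along in the measure $d\mu$, leaving the scalar pointwise inequality unchanged, is exactly right. The $\varepsilon$-shift $v_\varepsilon=v+\varepsilon$, $w_\varepsilon=w+\varepsilon$ to legitimize the singular test functions is also the standard device used in the weighted setting. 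One small caution: in the merely (not strictly) decreasing case, deducing $\psi\equiv 0$ from the Picone equality case in the nonlocal setting requires a bit of care---the equality case forces $w=cv$ on the support of $\psi$, and you then need to use continuity across $\partial\{w>v\}\cap\Omega$ (where $w=v>0$) to conclude $c=1$; make that step explicit rather than appealing to the exterior condition, which by itself does not rule out $c\neq 1$.
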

\noindent We now provide different notions of solutions to problem $\eqref{ent}$.
\begin{definition}\label{aux_space}
Let $u$ be a measurable function. Then $u\in \mathcal{T}_{0}^{s,p,\alpha}(\Omega)$ if for every $k>0$, $T_k(u)\in X_0^{s,p,\alpha}(\Omega)$.
\end{definition}
\begin{definition}[Weak solution]\label{weak}
	Let $g\in L^1(\Omega)$. A function $u\in X_0^{t,m,\alpha}(\Omega)$, for $0<t<s$ and $1\leq m<p$, is said to be a positive weak solution to $\eqref{ent}$ if 
	\begin{equation}\label{weak form}
		\int_{Q}|u(x)-u(y)|^{p-2}(u(x)-u(y))(\phi(x)-\phi(y))d\mu=\int_{\Omega}fh(u)\phi+\int_{\Omega}g \phi	
	\end{equation}
for every $\phi\in C_c^\infty(\Omega)$	and for every $\omega\subset \subset\Omega$, there exists $C_\omega>0$ such that $u\geq C_\omega>0$.	
\end{definition}
\noindent	In many cases, it is very tough to guarantee the uniqueness of weak solutions and to overcome this difficulty, the concept of entropy solution comes into picture to prove the uniqueness of solutions. 
\begin{definition}[Entropy solution]\label{entropy}
	Let $g\in L^1(\Omega)$. A function $u\in\mathcal{T}_0^{s,p,\alpha}(\Omega)$ is said to be an entropy solution to $\eqref{ent}$ if it satisfies the following:
	\begin{enumerate}
		\item \begin{equation}\label{condition 1}
		\int_{D_l}|u(x)-u(y)|^{p-1}d\mu\rightarrow 0~~\text{ as } l\rightarrow\infty,		\end{equation}
		where $$D_l=\{(x,y)\in\mathbb{R}^N\times \mathbb{R}^N:l+1\leq\max\{|u(x)|,|u(y)|\}\text{ with } \min\{|u(x)|,|u(y)|\}\leq l\},$$
		\item for every $\varphi\in X^{s,p,\alpha}_0(\Omega)\cap L^\infty(\Omega)$, $k>0$,
		\begin{align}\label{condition 2}
		\int_{Q}|u(x)-u(y)|^{p-2}(u(x)-u(y))&[T_k(u(x)-\varphi(x))-T_k(u(y)-\varphi(y))]d\mu\nonumber\\&\leq\int_{\Omega}fh(u)T_k(u-\varphi)+\int_{\Omega}g T_k(u-\varphi).
		\end{align}
		The solution $u$ is positive if for every $\omega\subset \subset\Omega$, there exists $C_\omega>0$ such that $u\geq C_\omega>0$.
	\end{enumerate}		
\end{definition}
\begin{remark}\label{entropy is weak}
	It is easy to show that every entropy solution of $\eqref{ent}$ is a weak solution of $\eqref{ent}$.
\end{remark}
\section{Existence of weak solution - Proof of Theorem $\ref{exist weak}$}\label{section weak}
In this section, we prove the existence of a positive weak solution to the problem $\eqref{ent}$ in the sense of Definition $\ref{weak}$. The proof is through approximation, since the problem involves an $L^1$ function $g$. Let us consider the following approximating problem:
 \begin{equation}\label{ent approx}
 \begin{split}
 (-\Delta)_{p,\alpha}^su_n&= f_n h_n(u_n+1/n)+g_n ~\text{in}~\Omega,\\
 u_n&>0~\text{in}~\Omega,\\
 u_n&= 0~\text{in}~\mathbb{R}^N\setminus\Omega.
 \end{split}
 \end{equation}
 Here, $f_n=T_n(f)$, $h_n=T_n(h)$ and $\{g_n\}\subset L^\infty(\Omega)$ is an increasing sequence such that $g_n\rightarrow g$ strongly in $L^1(\Omega)$. We say $u_n\in X_0^{s,p.\alpha}(\Omega)$ is a weak solution to $\eqref{ent approx}$, if $u_n$ satisfies
 \begin{equation}\label{weak formula approx}
 \int_{Q}|u_n(x)-u_n(y)|^{p-2}(u_n(x)-u_n(y))(\phi(x)-\phi(y))d\mu=\int_{\Omega}f_nh_n(u_n+1/n)\phi+\int_{\Omega}g_n \phi,
 \end{equation} 
 for every $\phi\in C_c^\infty(\Omega)$.
 \begin{lemma}\label{existence}
Let $h$ satisfies $\eqref{eqq1}$ and $\eqref{eqqq}$ with $0<\gamma,\theta<\infty$. Then, for a fixed $n\in \mathbb{N}$, there exists a unique weak solution  $u_n\in X_0^{s,p,\alpha}(\Omega)\cap L^\infty(\Omega)$ to $\eqref{ent approx}$.
 \begin{proof}
 	The proof follows the ideas of Panda et al.
 	 \cite{Panda} using Schauder's fixed point argument. Let us define a map, $$G:L^p(\Omega,d\nu)\rightarrow X_0^{s,p,\alpha}(\Omega)\subset L^p(\Omega,d\nu)$$
 	such that for any $w\in L^p(\Omega,d\nu)$ we obtain a unique weak solution $v\in X_0^{s,p,\alpha}(\Omega)$ to the following problem by Lax-Milgram theorem.
 	\begin{eqnarray}\label{helping}	
 	(-\Delta)^s_{p,\alpha} v&=& h_n\left(|w|+\frac{1}{n}\right)f_n+g_n ~\text{in}~\Omega,\nonumber\\
 	v&=& 0 ~\text{on}~ \partial\Omega.\label{schauder} 	
 	\end{eqnarray}
 	Further, $C_c^{\infty}(\Omega)$ is dense in $X_0^{s,p,\alpha}(\Omega)$ with respect to the norm topology of $X_0^{s,p,\alpha}(\Omega)$. Hence, we are allowed to choose $v$ as a test function in the weak formulation of $\eqref{helping}$. Thus, we get
 	\begin{align}\label{equation}
 	\int_{Q}|v(x)-v(y)|^{p}d\mu
 	& = {\int _\Omega h_n\left(|w|+\frac{1}{n}\right)f_n v} +\int _\Omega g_n v\nonumber
 	\\& \leq{ K_1 \int _{(|w|+\frac{1}{n}<M)}\frac{f_n v} {(|w|+\frac{1}{n})^\gamma}} + \max_{[M,N]} \{h_n\} \int_{(M\leq (|w|+\frac{1}{n})\leq N)} f_n v \nonumber\\& ~~~~~~+ K_2 \int _{(|w|+\frac{1}{n} > N)}\frac{f_n v} {(|w|+\frac{1}{n})^\theta} + C(n) \int_\Omega|v|\nonumber\\
 	& \leq K_1 n^{1+\gamma}\int _{(|w|+\frac{1}{n}< M)}|v| + n \max_{[M,N]} \{h_n\} \int_{(M\leq (|w|+\frac{1}{n})\leq N)}  |v| \nonumber
 	\\& ~~~~~~ + K_2n^{1+\theta} \int _{(|w|+\frac{1}{n} > N)} |v| + C(n) \int_\Omega|v|\nonumber
 	\\& \leq C^\prime (n,\gamma,\theta) \int _\Omega |v|d\nu= C^\prime (n,\gamma,\theta) \|v\|_{L^1(\Omega,d\nu)}.\nonumber
 	\end{align}
 Now by applying the weighted Sobolev embedding theorem, i.e. Theorem $\ref{weighted sobo ineq}$, there exists a constant $C(n,\gamma,\theta)>0$ such that
 	\begin{equation}\label{eq}
 	\|v\|_{X_0^{s,p,\alpha}(\Omega)}\leq C(n,\gamma,\theta),
 	\end{equation}
 	where $C(n,\gamma,\theta)$ is independent of $w$. We will now show the continuity and compactness of the operator $G$ to apply the Schauder fixed point theorem.\\
 	\textbf{Claim 1:} The map $G$ is continuous over $X_0^{s,p,\alpha}(\Omega)$.\\
 Let us consider a sequence $\{w_k\}\subset X_0^{s,p,\alpha}(\Omega)$ that converges strongly to $w$ with respect to the $X_0^{s,p,\alpha}$-norm. Thus, by the uniqueness of the weak solution, we denote $v_k=G(w_k)$, $v=G(w)$ and let $\bar{v_k}=v_k-v$. According to the Remark $\ref{inequalities}$-(5) and (6),
 \begin{align}
 \text{if } 1<p<2, \int_{Q}&\left(|v_k(x)-v_k(y)|+|v(x)-v(y)|\right)^{p-2}|\bar{v_k}(x)-\bar{v_k}(y)|^2 d\mu\nonumber\\&\leq 2\int_{\Omega}\left(h_n\left(|w_k| +\frac{1}{n}\right)f_n-h_n\left(|w| +\frac{1}{n}\right)f_n\right) \bar{v_k}
  \end{align}
  and
 \begin{align}
 \text{if } p\geq2, \int_{Q}|\bar{v_k}(x)-\bar{v_k}(y)|^p d\mu\leq 2^{p-2}\int_{\Omega}\left(h_n\left(|w_k| +\frac{1}{n}\right)f_n-h_n\left(|w| +\frac{1}{n}\right)f_n\right) \bar{v_k}.
 \end{align}
 Hence, by Theorem $\ref{weighted sobo ineq}$ and H\"{o}lder's inequality we establish the following for the case of $p\geq 2$.
 \begin{align}
 \|\bar{v_k}\|^p_{X_0^{s,p,\alpha}(\Omega)}&\leq 2^{p-2}\left\| h_n\left(|w_k| +\frac{1}{n}\right)f_n-h_n\left(|w| +\frac{1}{n}\right)f_n\right\|_{L^{(p_s^*)^\prime}(\Omega)}\|\bar{v_k}\|_{L^{p_s^*}(\Omega)}\nonumber\\&\leq C_1(N,p,\alpha,\Omega)\left\| h_n\left(|w_k| +\frac{1}{n}\right)f_n-h_n\left(|w| +\frac{1}{n}\right)f_n\right\|_{L^{(p_s^*)^\prime}(\Omega)}\left\|\frac{\bar{v_k}}{|x|^{2\alpha/p}}\right\|_{L^{p_s^*}(\Omega)}\nonumber\\&\leq C_2 \left\| h_n\left(|w_k| +\frac{1}{n}\right)f_n-h_n\left(|w| +\frac{1}{n}\right)f_n\right\|_{L^{(p_s^*)^\prime}(\Omega)}\left\|\bar{v_k}\right\|_{X_0^{s,p,\alpha}(\Omega)},\nonumber
 \end{align}
 where $(p_s^*)'$ is the H\"{o}lder conjugate of $p_s^*$. This implies 
 \begin{equation}
\|\bar{v_k}\|_{X_0^{s,p,\alpha}(\Omega)}\leq C \left\| h_n\left(|w_k| +\frac{1}{n}\right)f_n-h_n\left(|w| +\frac{1}{n}\right)f_n\right\|_{L^{(p_s^*)^\prime}(\Omega)}^{1/{p-1}}. \nonumber
 \end{equation}
 Now on applying the dominated convergence theorem we prove that $\lim\limits_{k\rightarrow \infty}\|\bar{v_k}\|_{X_0^{s,p,\alpha}(\Omega)}=0$. This gives $\lim\limits_{k\rightarrow \infty}\|v_k-v\|_{X_0^{s,p,\alpha}(\Omega)}=0$.	In this similar manner, for the case $1<p<2$, we can prove the strong convergence  $v_k\rightarrow v$ in $X_0^{s,p,\alpha}(\Omega)$. Thus, $G$ is continuous from $X_0^{s,p,\alpha}(\Omega)$ to $X_0^{s,p,\alpha}(\Omega)$.\\
 	\textbf{Claim 2:}  $G(X_0^{s,p,\alpha}(\Omega))$ is relatively compact in $X_0^{s,p,\alpha}(\Omega)$.\\
 Let $\{w_k\}$ is a bounded sequence in $X_0^{s,p,\alpha}(\Omega)$. Then, there exists $w\in X_0^{s,p,\alpha}(\Omega)$ and a subsequence of $\{w_k\}$ (still denoted as $\{w_k\}$) such that $w_k\rightarrow w$ weakly in $X_0^{s,p,\alpha}(\Omega)$. We have already proved in $\eqref{eq}$ that
 	\begin{align}
 		\|G(w_k)\|_{X_0^{s,p,\alpha}(\Omega)}\leq C,\nonumber
 	\end{align}
 	for $C>0$ independent of $k$. Thus, there exists $v\in X_0^{s,p,\alpha}(\Omega)$ such that $G(w_k)\rightarrow v$ weakly in $X_0^{s,p,\alpha}(\Omega)$. Now by passing the limit $k\rightarrow\infty$ in the weak formulation and using some basic computation we prove that $G(w)=v$. Following the similar lines used in the proof of Claim 1, we finally show that $\lim\limits_{k\rightarrow \infty}\|G(w_k)-G(w)\|_{X_0^{s,p,\alpha}(\Omega)}=0$. Hence the claim.\\
 	Thus, on using the Schauder fixed point theorem to $G$, we obtain a fixed point $u_n\in X_0^{s,p,\alpha}(\Omega)$ that is also a weak solution to the problem $\eqref{ent approx}$ in $X_0^{s,p,\alpha}(\Omega)$. Further, by the strong maximum principle [Lemma 2.3, \cite{Mosconi}], $u_n>0$ in $\Omega$. \\
 	\textbf{Claim 3:} Uniqueness of weak solution.\\
 	To prove this claim, suppose the problem $\eqref{ent approx}$ admits two different weak solutions $u_n$ and $v_n$. Let us take $\phi=(u_n-v_n)^+$ as a test function in the weak formulation $\eqref{weak formula approx}$. Thus, for $p\geq 2$, with the consideration of Lemma 9 in \cite{Lindgren}, we obtain the following.
 	\begin{align}
 0 &\leq \int_{Q} |(u_n(x)-v_n(x))^+-(u_n(y)-v_n(y))^+|^p d\mu\nonumber\\& \leq \int_{Q}\left[|u_n(x)-u_n(y)|^{p-2}(u_n(x)-u_n(y))-|v_n(x)-v_n(y)|^{p-2}(v_n(x)-v_n(y))\right]\nonumber\\&\nonumber~~~~~~~~~~~\times[(u_n(x)-v_n(x))^+-(u_n(y)-v_n(y))^+]d\mu\nonumber\\&=\int_{\Omega}\left(f_nh_n(u_n+1/n)-f_nh_n(v_n+1/n)\right)(u_n-v_n)^+\nonumber\\&\leq 0.\nonumber
 	\end{align}
 	This implies $(u_n-v_n)^+=0$ a.e in $\Omega$, since $u_n=v_n=0$ in $\mathbb{R}^N\setminus\Omega$ and thus  $u_n\leq v_n$ a.e. in $\Omega$. The same proof holds for $1<p<2$. Proceeding similarly with $\phi=(v_n-u_n)^+$ as a test function, we can show that $u_n\geq v_n$ a.e. in $\Omega$. This proves the claim.\\
 	Since the right hand side of $\eqref{ent approx}$ belongs to $L^\infty(\Omega)$, by virtue of Lemma 4.3 of \cite{Abdellaoui 1}, we conclude that $u_n\in L^\infty(\Omega)$.			 
 \end{proof}	
 \end{lemma}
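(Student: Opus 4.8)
The plan is to produce $u_n$ by Schauder's fixed point theorem, taking advantage of the fact that for $n$ fixed the right-hand side of \eqref{ent approx} is \emph{a priori bounded in $L^\infty(\Omega)$ independently of $u_n$}: since $f_n=T_n(f)\le n$, $0\le h_n=T_n(h)\le n$ and $g_n\in L^\infty(\Omega)$, one has $\|f_n h_n(u_n+1/n)+g_n\|_{L^\infty(\Omega)}\le n^2+\|g_n\|_{L^\infty(\Omega)}=:C(n)$. So I would freeze the nonlinearity: define the solution map $G\colon L^p(\Omega,d\nu)\to L^p(\Omega,d\nu)$ by letting $v=G(w)$ be the unique weak solution of
\[
(-\Delta)^s_{p,\alpha}v=f_n h_n(|w|+1/n)+g_n \text{ in }\Omega,\qquad v=0 \text{ in }\mathbb{R}^N\setminus\Omega.
\]
The datum on the right lies in $L^\infty(\Omega)$, hence in $\big(X_0^{s,p,\alpha}(\Omega)\big)^*$ via the continuous chain $X_0^{s,p,\alpha}(\Omega)\hookrightarrow L^p(\Omega,d\nu)\hookrightarrow L^1(\Omega)$, so this auxiliary problem is uniquely solvable in $X_0^{s,p,\alpha}(\Omega)$: for $p=2$ by Lax--Milgram, and for general $p$ because $(-\Delta)^s_{p,\alpha}$ is a strictly monotone, coercive, continuous operator on $X_0^{s,p,\alpha}(\Omega)$ (equivalently, minimise the strictly convex coercive functional $v\mapsto \tfrac1p\|v\|_{X_0^{s,p,\alpha}(\Omega)}^p-\int_\Omega(f_n h_n(|w|+1/n)+g_n)v$). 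Thus $G$ is single-valued.

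Next I would check the hypotheses of Schauder's theorem. First, testing the auxiliary equation with $v$ (legitimate since $C_c^\infty(\Omega)$ is dense in $X_0^{s,p,\alpha}(\Omega)$) and using the weighted Sobolev inequality of Theorem \ref{weighted sobo ineq} with the bound $C(n)$ gives $\|v\|_{X_0^{s,p,\alpha}(\Omega)}\le C'(n)$ with $C'(n)$ independent of $w$; hence $G$ maps a fixed closed ball $B\subset L^p(\Omega,d\nu)$ into itself. Second, by this bound $G(B)$ is bounded in $X_0^{s,p,\alpha}(\Omega)$, which embeds compactly into $L^p(\Omega,d\nu)$ (a Rellich--Kondrachov-type result, available because $\Omega$ is bounded and $\alpha<\frac{N-sp}{2}$ makes $|x|^{-\alpha}$ admissible), so $G(B)$ is relatively compact in $L^p(\Omega,d\nu)$. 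Third, for continuity: if $w_k\to w$ in $L^p(\Omega,d\nu)$, along a subsequence $w_k\to w$ a.e., so $f_n h_n(|w_k|+1/n)\to f_n h_n(|w|+1/n)$ a.e.\ by continuity of $h_n$, dominated by the constant $n^2\in L^\infty(\Omega)$; by dominated convergence the data converge in $L^{(p_s^*)'}(\Omega)$, and then testing the equations for $G(w_k)$ and $G(w)$ with their difference and using the monotonicity inequalities of Remark \ref{inequalities}(5)--(6) together with H\"{o}lder's inequality yields $\|G(w_k)-G(w)\|_{X_0^{s,p,\alpha}(\Omega)}\to 0$, hence convergence in $L^p(\Omega,d\nu)$; a sub-subsequence argument gives convergence of the full sequence.

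Schauder's theorem then yields a fixed point $u_n=G(u_n)\in X_0^{s,p,\alpha}(\Omega)$, i.e.\ a weak solution of $(-\Delta)^s_{p,\alpha}u_n=f_n h_n(|u_n|+1/n)+g_n$. The datum being nonnegative, testing with $(-u_n)^+$ and using monotonicity of the operator forces $u_n\ge 0$, so $|u_n|=u_n$ and $u_n$ solves \eqref{ent approx}; since $f,g$ are positive the datum is not identically zero, and the strong maximum principle [Lemma 2.3, \cite{Mosconi}] gives $u_n>0$ in $\Omega$. For $u_n\in L^\infty(\Omega)$ I would run a Stampacchia truncation argument using that the datum is in $L^\infty(\Omega)$, or simply quote Lemma 4.3 of \cite{Abdellaoui 1}. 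Finally, uniqueness: if $u_n,v_n$ both solve \eqref{ent approx}, take $\varphi=(u_n-v_n)^+\in X_0^{s,p,\alpha}(\Omega)$ (admissible by density and because $t\mapsto t^+$ is Lipschitz and vanishes at $0$) as test function in both weak formulations and subtract; the left side is $\ge 0$ by monotonicity of $(-\Delta)^s_{p,\alpha}$ (Remark \ref{inequalities}(5)--(6), cf.\ Lemma 9 of \cite{Lindgren}), while the right side equals $\int_\Omega f_n\big(h_n(u_n+1/n)-h_n(v_n+1/n)\big)(u_n-v_n)^+\le 0$ since $h_n$ is non-increasing; hence $(u_n-v_n)^+=0$ a.e., $u_n\le v_n$, and by symmetry $u_n=v_n$.

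I expect the main obstacle to be the functional-analytic core of the second step in the weighted fractional framework: showing that $G$ is well defined, continuous and compact — especially the compact embedding $X_0^{s,p,\alpha}(\Omega)\hookrightarrow L^p(\Omega,d\nu)$, which must handle the singular weight $|x|^{-2\alpha}$ near the origin, together with the density of $C_c^\infty(\Omega)$ in $X_0^{s,p,\alpha}(\Omega)$ that legitimises using $v$ and $(u_n-v_n)^+$ as test functions. Everything else is a routine approximation-and-monotonicity computation.
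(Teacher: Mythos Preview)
Your proposal is correct and follows the same overall strategy as the paper: Schauder's fixed point theorem applied to the same frozen map $G$, then the strong maximum principle for positivity, Lemma~4.3 of \cite{Abdellaoui 1} for the $L^\infty$ bound, and the monotonicity/non-increasing-$h_n$ argument for uniqueness. Two details in verifying Schauder's hypotheses differ, and in both your variant is the cleaner one. First, for the a priori bound $\|G(w)\|_{X_0^{s,p,\alpha}(\Omega)}\le C(n)$, you simply use $f_n h_n\le n^2$ (both factors are truncated at level $n$), whereas the paper splits $\Omega$ into the three regions $\{|w|+1/n<M\}$, $\{M\le |w|+1/n\le N\}$, $\{|w|+1/n>N\}$ and invokes the growth conditions \eqref{eqq1}--\eqref{eqqq}; your shortcut is enough because $n$ is fixed. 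Second, you run Schauder in $L^p(\Omega,d\nu)$ and obtain compactness from the compact embedding $X_0^{s,p,\alpha}(\Omega)\hookrightarrow L^p(\Omega,d\nu)$; the paper instead establishes continuity and relative compactness of $G$ directly in $X_0^{s,p,\alpha}(\Omega)$, which forces it to upgrade weak convergence of $G(w_k)$ to strong $X_0^{s,p,\alpha}$-convergence by re-running the monotonicity estimates of its Claim~1. Both routes work; yours is the more standard application of Schauder and sidesteps that extra step, at the cost of relying on the weighted compact embedding you flag as the main technical point.
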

\begin{lemma}\label{a priori}
	Let $u_n$ be a unique positive weak solution to $\eqref{ent approx}$. Then, 
	\begin{enumerate}
		\item the sequence $\{u_n\}$ is an increasing sequence w.r.t $n$ and for every set $\omega\subset\subset\Omega$ there exists constant $C_\omega>0$, independent of $n$, such that
		\begin{equation}\label{positive}
		u_n\geq C_\omega > 0,\text{ for every } n\in \mathbb{N}.	
		\end{equation}
		\item $h(u_n)f\in L^1(\Omega)$.
	\end{enumerate}
\begin{proof}
Let us consider the problems satisfied by $u_n$ and $u_{n+1}$. Then, subtracting these two problems and taking the test function $(u_n-u_{n+1})^+$ in its weak formulation we get
	\begin{align}
	0 & \leq \int_{Q} |(u_n(x)-u_{n+1}(y))^+-(u_n(x)-u_{n+1}(y))^+|^p d\mu\nonumber\\&\leq \int_{Q}\left(I_p(u_n)-I_p(u_{n+1}\right)((u_n(x)-u_{n+1}(x))^+-(u_n(y)-u_{n+1}(y))^+)d\mu\nonumber\\&=\int_{\Omega}\left(f_nh_n(u_n+1/n)-f_{n+1}h_{n+1}(u_{n+1}+1/\{n+1\})\right)(u_n-u_{n+1})^++\int_{\Omega}(g_n-g_{n+1})(u_n-u_{n+1})^+\nonumber\\&\leq \int_{\Omega}\left(f_{n+1}h_{n+1}(u_n+1/\{n+1\})-f_{n+1}h_{n+1}(u_{n+1}+1/\{n+1\})\right)(u_n-u_{n+1})^+\nonumber\\&\leq 0,\nonumber
	\end{align}
	where $I_p(v)=|v(x)-v(y)|^{p-2}(v(x)-v(y))$. This implies $(u_n-u_{n+1})^+=0$ a.e. in $\Omega$ and hence $u_n\leq u_{n+1}$ a.e. in $\Omega$.\\
	Further, from the definition of the problem \eqref{ent approx}, $u_1>0$ in $\Omega$ and belongs to $L^{\infty}(\Omega)$, i.e. there exists $M>0$ such that $\|u_1\|_{L^\infty(\Omega)}\leq M$. Since $\{u_n\}$ is an increasing sequence, $u_n$ verifies $\eqref{positive}$ for all $n\geq1$. This proves $(1)$.\\
	Consider the following nonlinear eigenvalue problem:
	\begin{equation}
	\begin{split}
	(-\Delta)_{p,\alpha}^sv&= \lambda |v|^{p-2}v ~\text{in}~\Omega,\\
	v&= 0~\text{in}~\mathbb{R}^N\setminus\Omega.
	\end{split}
	\end{equation}
Moreover, from the weighted fractional Sobolev embedding, we have the Rayleigh quotient as follows.
$$\lambda_1=\underset{v\in X_0^{s,p,\alpha}(\Omega),v\neq0}{\inf}\left\{\frac{\|v\|_{X_0^{s,p,\alpha}(\Omega)}^p}{\|v\|^p_{L^p(\Omega)}}\right\}$$
and $\lambda_1\in(0,\infty)$. Let $\phi_1$ be the eigenfunction corresponding to the eigenvalue $\lambda_1$. Then, it is not difficult to prove that either $\phi_1>0$ a.e. in $\Omega$ or $\phi_1<0$ a.e. in $\Omega$, refer Proposition 2.3 of \cite{Iannizzotto}. Indeed, if $\psi=\phi_1^-$ is chosen as a test function to the following weak formulation
$$\int_{Q}|\phi_1(x)-\phi_1(y)|^{p-2}(\phi_1(x)-\phi_1(y))(\psi(x)-\psi(y))d\mu=\lambda_1\int_\Omega|\phi_1|^{p-2}\phi_1\psi,$$
we get $\phi_1^-\equiv0$ a.e. in $\Omega$. So, $\phi_1>0$ a.e. in $\Omega$. We already know that $u_1\in X_0^{s,p,\alpha}(\Omega)$ is a weak solution to $\eqref{ent approx}$ with $n=1$ and $h$ is a nonincreasing function. Thus, we may choose $c>0$ sufficiently small such that 
\begin{align}
(-\Delta)_{p,\alpha}^s(c\phi_1)-f_1h_1(c\phi_1+1)-g_1&<0\nonumber\\&=(-\Delta)_{p,\alpha}^su_1-f_1h_1(u_1+1)-g_1,~\text{in }\Omega.\nonumber
\end{align}
By using the comparison principle, stated in Lemma $\ref{comparison}$, we observe $c\phi_1\leq u_1$ in $\Omega$. Following the arguments as in Lazer and McKenna \cite{lazer}, for $\gamma<1$, we get $h(\phi_1)\in L^1(\Omega)$. This implies $h(u_1)f\leq h(c\phi_1)f\in L^1(\Omega)$. Therefore, $h(u_n)f\in L^1(\Omega)$ for each $n$ and this concludes the proof of $(2)$. 
\end{proof}
\end{lemma}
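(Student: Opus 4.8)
The plan is to prove the two assertions in the order stated, each time exploiting the structure of the approximating problems \eqref{ent approx} together with the accretivity of $(-\Delta)^s_{p,\alpha}$ and the comparison principle of Lemma \ref{comparison}. First I would establish monotonicity: subtract the weak formulations satisfied by $u_n$ and $u_{n+1}$ and test the difference with $(u_n-u_{n+1})^+\in X_0^{s,p,\alpha}(\Omega)\cap L^\infty(\Omega)$, which is admissible by density of $C_c^\infty(\Omega)$ in $X_0^{s,p,\alpha}(\Omega)$ and the boundedness of $u_n,u_{n+1}$. On the left-hand side, the monotonicity of the operator (Remark \ref{inequalities}(5)--(6), or the Lindgren-type inequality of \cite{Lindgren}) makes the bilinear-type form nonnegative, and in fact bounds it below by $\|(u_n-u_{n+1})^+\|_{X_0^{s,p,\alpha}(\Omega)}^p$ when $p\ge2$. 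On the right-hand side, using $f_n\le f_{n+1}$, $g_n\le g_{n+1}$, $h_n\le h_{n+1}$ and the fact that each $h_m$ is non-increasing, on the set $\{u_n>u_{n+1}\}$ one obtains $f_nh_n(u_n+\tfrac1n)-f_{n+1}h_{n+1}(u_{n+1}+\tfrac1{n+1})\le f_{n+1}\big(h_{n+1}(u_n+\tfrac1{n+1})-h_{n+1}(u_{n+1}+\tfrac1{n+1})\big)\le0$, while $g_n-g_{n+1}\le0$; hence the whole right-hand side is $\le0$, forcing $(u_n-u_{n+1})^+\equiv0$, i.e. $\{u_n\}$ is non-decreasing.

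Next I would prove the uniform interior lower bound. Since $\{u_n\}$ increases, it suffices to bound $u_1$ from below on each $\omega\subset\subset\Omega$ by a positive constant, because then $u_n\ge u_1\ge C_\omega$ for all $n$; this is exactly what reduces the $n$-uniform estimate to a statement about a single function. To bound $u_1$ I would compare it with $c\phi_1$, where $\phi_1$ is the first eigenfunction of $(-\Delta)^s_{p,\alpha}$ with zero exterior data. One checks $\lambda_1\in(0,\infty)$ from the weighted Sobolev inequality (Theorem \ref{weighted sobo ineq}), that $\phi_1>0$ in $\Omega$ by testing its equation with $\phi_1^-$ (cf. Proposition 2.3 of \cite{Iannizzotto}), that $\phi_1\in L^\infty(\Omega)$, and, via interior Harnack applied to the supersolution $(-\Delta)^s_{p,\alpha}\phi_1=\lambda_1\phi_1^{p-1}\ge0$, that $\phi_1$ is bounded below by a positive constant on each compact subset of $\Omega$. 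For $c>0$ small one has $(-\Delta)^s_{p,\alpha}(c\phi_1)=c^{p-1}\lambda_1\phi_1^{p-1}\le f_1h_1(c\phi_1+1)+g_1$, because $f_1>0$ and $h_1(c\phi_1+1)\ge h_1(\|c\phi_1\|_{L^\infty(\Omega)}+1)>0$, while $u_1$ solves $(-\Delta)^s_{p,\alpha}u_1=f_1h_1(u_1+1)+g_1$. Applying Lemma \ref{comparison} with $H(x,v)=f_1(x)h_1(v+1)+g_1(x)$ — for which $v\mapsto H(x,v)/v^{p-1}$ is decreasing since $h_1$ is non-increasing and $g_1\ge0$ — gives $c\phi_1\le u_1$ in $\Omega$, hence $u_1\ge c\min_{\overline\omega}\phi_1=:C_\omega>0$. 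This establishes (1).

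Finally, for (2), monotonicity of $\{u_n\}$ and of $h$ give $h(u_n)\le h(u_1)\le h(c\phi_1)$ a.e. On $\{c\phi_1\ge M\}$ one has $h(c\phi_1)\le h(M)<\infty$, so $h(c\phi_1)f\le h(M)f\in L^1(\Omega)$ since $f\in L^q(\Omega)\subset L^1(\Omega)$; on $\{c\phi_1<M\}$, \eqref{eqq1} gives $h(c\phi_1)\le K_1c^{-\gamma}\phi_1^{-\gamma}$, and arguing as in Lazer--McKenna \cite{lazer} — using the boundary behaviour $\phi_1\gtrsim d(\cdot,\partial\Omega)^{s}$, the restriction $0<\gamma<1$, and Hölder's inequality against $f\in L^q(\Omega)$ — one gets $\phi_1^{-\gamma}f\in L^1(\Omega)$; combining the two regions yields $h(u_n)f\in L^1(\Omega)$ for every $n$. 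The step I expect to be the main obstacle is the uniform-in-$n$ interior positivity in (1): the data $f_n,h_n,g_n$ and the shift $\tfrac1n$ all vary with $n$, so no single subsolution works directly for a general $u_n$, and it is precisely the monotonicity proved first that collapses the whole issue to $u_1$ and one application of the comparison principle against $c\phi_1$; a secondary technical nuisance is pinning down the boundary rate of $\phi_1$ that the Lazer--McKenna-type argument needs for $h(\phi_1)f\in L^1(\Omega)$.
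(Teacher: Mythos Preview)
Your proposal is correct and follows essentially the same route as the paper: monotonicity via testing the subtracted equations with $(u_n-u_{n+1})^+$ and exploiting $f_n\le f_{n+1}$, $g_n\le g_{n+1}$, $h$ non-increasing; then comparison of $u_1$ with $c\phi_1$ and a Lazer--McKenna-type integrability argument for $h(c\phi_1)f$. The only organizational difference is that you invoke the subsolution $c\phi_1$ already in part~(1) to obtain the explicit interior lower bound $u_1\ge c\phi_1\ge C_\omega$, whereas the paper simply asserts $u_1>0$ (via the strong maximum principle from Lemma~\ref{existence}) for part~(1) and postpones the $c\phi_1$ comparison to part~(2); your ordering is arguably cleaner since it makes the constant $C_\omega$ concrete.
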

\noindent In the proceeding lemma we find some apriori estimates so that we can pass the limit $n\rightarrow \infty$ in $\eqref{weak formula approx}$.
\begin{lemma}\label{bounded}
	Let $h$ satisfies the growth conditions given in 
	$\eqref{eqq1}$, $\eqref{eqqq}$ for $0<\gamma<1$, $\theta>0$ and let $u_n\in X_0^{s,p,\alpha}(\Omega)$ be a positive weak solution to $\eqref{ent approx}$. Then, the sequence $\{u_n\}$ is bounded in $L^{m_1}(\Omega,d\nu)$ for every $m_1<\frac{N(p-1)}{N-sp}$. Furthermore, 
		\begin{equation}\label{1st 1st}
		\int_{Q}\frac{|u_n(x)-u_n(y)|^m}{|x-y|^{N+tm}}\frac{dxdy}{|x|^\alpha|y|^\alpha}\leq C,
		\end{equation} 
for every $1\leq m<\frac{N(p-1)}{N-s}$ and $0<t<s$.
	\begin{proof}
 Let $u_n\in  X_0^{s,p,\alpha}(\Omega)$ be a unique positive weak solution to $\eqref{ent approx}$. Then, for any $k\geq1$, using $\phi=T_k(u_n)$ as a test function in $\eqref{weak formula approx}$, we have
		\begin{align}\label{entropy1}
	\int_{Q}|u_n(x)-u_n(y)|^{p-2}&(u_n(x)-u_n(y))(T_k(u_n(x)-T_k(u_n(y))d\mu \nonumber\\& =\int_{\Omega}f_nh_n(u_n+1/n)T_k(u_n)+\int_{\Omega}g_nT_k(u_n)\nonumber\\&\leq{ K_1 \int _{(u_n +\frac{1}{n}< M)}\frac{f_n T_k(u_n)} {(u_n +\frac{1}{n})^\gamma}} + \max_{[M,N]} \{h(s)\} \int_{(M\leq u_n+\frac{1}{n}\leq N)} f_n T_k(u_n) \nonumber \\& ~~~~+ K_2 \int _{(u_n+\frac{1}{n} > N)}\frac{f_n T_k(u_n)} {(u_n+\frac{1}{n})^\theta}+k\int_{\Omega}g_n\nonumber\\&\leq  K_1M^{1-\gamma} \int _{(u_n+\frac{1}{n}< M)}f + k \max_{[M,N]} \{h(s)\} \int_{(M\leq u_n+\frac{1}{n}\leq N)}f\nonumber\\&~~~~+ \frac{K_2 k}{N^{\theta}} \int _{(u_n+\frac{1}{n} > N)} f+ C_1k\nonumber \\&\leq C_1k
		\end{align}
On using Theorem $\ref{weighted sobo ineq}$, Remark $\ref{inequalities}$ and the above inequality $\eqref{entropy1}$, we have
\begin{align}\label{truncation bound}
S\left(\int_{\Omega}\frac{|T_k(u_n)(x)|^{p_s^*}}{|x|^{\frac{2\alpha p_s^*}{p}}}dx\right)^{\frac{p}{p_s^*}}&\leq \int_{Q}|T_k(u_n)(x)-T_k(u_n)(y)|^p d\mu\nonumber\\&\leq 	\int_{Q}|u_n(x)-u_n(y)|^{p-2}(u_n(x)-u_n(y))(T_k(u_n(x)-T_k(u_n(y))d\mu\nonumber\\& \leq C_1k,~\forall k>0.
\end{align}
Hence,
\begin{align}
\nu(\{x\in\Omega:|u_n(x)|\geq k\})&\leq \nu(\{x\in\Omega:|T_k(u_n(x))|=k\})\nonumber\\&\leq C_2\int_{\Omega}\frac{|T_k(u_n)(x)|^{p_s^*}}{k^{p_s^*}|x|^{\frac{2\alpha p_s^*}{p}}}dx \nonumber\\&\leq \frac{C}{k^{\frac{N(p-1)}{N-sp}}}.
\end{align}
Therefore, by Definition $\ref{weighted marcin}$,  $\{u_n\}$ is bounded in $M^{\frac{N(p-1)}{N-sp}}(\Omega,d\nu)$ and hence bounded in $L^{m_1}(\Omega,d\nu)$ for every $m_1<\frac{N(p-1)}{N-sp}$. Consequently, the sequence $\{|u_n|^{p-2}u_n\}$ is bounded in $L^{m_2}(\Omega,d\nu)$ for every $m_2<\frac{N}{N-sp}$.\\
Let us choose $m<\frac{N(p-1)}{N-s}$ and $t<s$ very close to $s$ such that $\frac{mp(s-t)}{p-m}<\alpha$. Denote $$w_n(x)=T_1(u_n(x))-\frac{1}{(u_n(x)+1)^\beta},\text{ for some }\beta>0 \text{ which will be chosen later}.$$
On using $w_n$ as a test function in $\eqref{weak formula approx}$, we establish the following.
\begin{align}\label{3.2 refer}
&\int_{Q}|u_n(x)-u_n(y)|^{p-2}(u_n(x)-u_n(y))(T_1(u_n(x))-T_1(u_n(y)))d\mu\nonumber\\&~~~~+\int_{Q}|u_n(x)-u_n(y)|^{p-2}(u_n(x)-u_n(y))\left(\frac{1}{(u_n(y)+1)^\beta}-\frac{1}{(u_n(x)+1)^\beta}\right)\nonumber\\&=\int_{\Omega}f_nh_n(u_n+1/n)\left(T_1(u_n(x))-\frac{1}{(u_n(x)+1)^\beta}\right)+\int_{\Omega}g_n\left(T_1(u_n(x))-\frac{1}{(u_n(x)+1)^\beta}\right)\nonumber\\&\leq{ K_1 \int _{(u_n +\frac{1}{n}< M)}\frac{f_n T_1(u_n)} {(u_n +\frac{1}{n})^\gamma}} + \max_{[M,N]} \{h(s)\} \int_{(M\leq u_n+\frac{1}{n}\leq N)} f_n T_1(u_n) + K_2 \int _{(u_n+\frac{1}{n} > N)}\frac{f_n T_1(u_n)} {(u_n+\frac{1}{n})^\theta}+C_1\nonumber\\&\leq  K_1M^{1-\gamma} \int _{(u_n+\frac{1}{n}< M)}f +  \max_{[M,N]} \{h(s)\} \int_{(M\leq u_n+\frac{1}{n}\leq N)}f+ \frac{K_2}{N^{\theta}} \int _{(u_n+\frac{1}{n} > N)} f+ C_1\nonumber \\&\leq C_2.
\end{align}
	On the other hand, the use of Remark $\ref{inequalities}$ implies that
	\begin{align}\label{3.2 refer 1}
	& \int_{Q}\frac{|(u_n+1)^{\frac{p+\beta-1}{p}}(x)-(u_n+1)^{\frac{p+\beta-1}{p}}(y)|^p}{(u_n(x)+1)^\beta(u_n(y)+1)^\beta}d\mu\nonumber\\&\leq\int_{Q}|u_n(x)-u_n(y)|^{p-2}(u_n(x)-u_n(y))\frac{(u_n(x)+1)^\beta-(u_n(y)+1)^\beta}{(u_n(x)+1)^\beta(u_n(y)+1)^\beta}d\mu\nonumber\\&\leq\int_{Q}|T_1(u_n(x))-T_1(u_n(y))|^{p}d\mu\nonumber\\&~~~~+\int_{Q}|u_n(x)-u_n(y)|^{p-2}(u_n(x)-u_n(y))\left(\frac{1}{(u_n(y)+1)^\beta}-\frac{1}{(u_n(x)+1)^\beta}\right)\nonumber\\&\leq\int_{Q}|u_n(x)-u_n(y)|^{p-2}(u_n(x)-u_n(y))(T_1(u_n(x))-T_1(u_n(y)))d\mu\nonumber\\&~~~~+\int_{Q}|u_n(x)-u_n(y)|^{p-2}(u_n(x)-u_n(y))\left(\frac{1}{(u_n(y)+1)^\beta}-\frac{1}{(u_n(x)+1)^\beta}\right).
	\end{align}
	Thus, the inequalities $\eqref{3.2 refer}$ and $\eqref{3.2 refer 1}$ together yields
	\begin{equation}\label{1st 1}
	\int_{Q}\frac{|(u_n+1)^{\frac{p+\beta-1}{p}}(x)-(u_n+1)^{\frac{p+\beta-1}{p}}(y)|^p}{(u_n(x)+1)^\beta(u_n(y)+1)^\beta}d\mu\leq C.
	\end{equation}
	Abdellaoui et al. [Lemma 3.2, \cite{Abdellaoui 1}] established the following inequality for certain range of $\beta>0$.
	\begin{equation}\label{1st 2}
	\int_{\Omega}\int_{\Omega}\frac{|u_n(x)-u_n(y)|^m}{|x-y|^{N+tm}}\frac{dxdy}{|x|^\alpha|y|^\alpha}\leq C_4\left(\int_{Q}\frac{|(u_n+1)^{\frac{p+\beta-1}{p}}(x)-(u_n+1)^{\frac{p+\beta-1}{p}}(y)|^p}{(u_n(x)+1)^\beta(u_n(y)+1)^\beta}d\mu\right)^{\frac{m}{p}},
	\end{equation}
	for  $m<\frac{N(p-1)}{N-s}$ and $0<t<s$. Then, by combining $\eqref{1st 1}$ and $\eqref{1st 2}$, the following is obtained.  
	\begin{equation}\label{1st}
	\int_{\Omega}\int_{\Omega}\frac{|u_n(x)-u_n(y)|^m}{|x-y|^{N+tm}}\frac{dxdy}{|x|^\alpha|y|^\alpha}\leq C.
	\end{equation}
We can always choose a $R>0$ such that $|x-y|\sim |y|$ for $y\in \mathbb{R^N}\setminus B_R$, where $B_R=\{x\in\mathbb{R}^N: |x|<R\}$. 	Assume that $(x,y)\in \Omega\times (B_R\setminus
\Omega)$, then $$\underset{(x,y)\in \Omega\times (B_R\setminus
	\Omega)}{\sup}\left\{\frac{1}{|x-y|^{N+mt}}\right\}\leq C<\infty.$$
Since $\{u_n\}$ is bounded in $L^{m_1}(\Omega,d\nu)$ for every $m_1<\frac{N(p-1)}{N-sp}$, the following is obtained for $m<\frac{N(p-1)}{N-s}$.
\begin{align}\label{ball R}
\int_{\Omega}\int_{B_R\setminus
	\Omega}\frac{|u_n(x)-u_n(y)|^m}{|x-y|^{N+mt}}\frac{dxdy}{|x|^\alpha|y|^\alpha}&=\int_{\Omega}\int_{B_R\setminus
	\Omega}\frac{|u_n(x)|^m}{|x-y|^{N+mt}}\frac{dxdy}{|x|^\alpha|y|^\alpha}\nonumber\\&\leq C \int_{\Omega}\int_{B_R\setminus
	\Omega}|u_n(x)|^m\frac{dxdy}{|x|^\alpha|y|^\alpha}	\nonumber\\&\leq C_3
\end{align} 
and similarly 
\begin{align}\label{outside ball R}
\int_{\Omega}\int_{\mathbb{R^N}\setminus B_R}\frac{|u_n(x)-u_n(y)|^m}{|x-y|^{N+mt}}\frac{dxdy}{|x|^\alpha|y|^\alpha}&=\int_{\Omega}\int_{\mathbb{R^N}\setminus B_R}\frac{|u_n(x)|^m}{|x|^{\alpha}}\frac{dxdy}{|y|^{N+mt+\alpha}}\nonumber\\&\leq C_4 \int_{\mathbb{R^N}\setminus B_R}\frac{dy}{|y|^{N+mt+\alpha}}	\nonumber\\&\leq C_5.
\end{align} 
On combining $\eqref{ball R}$ and $\eqref{outside ball R}$ we obtain 
\begin{equation}\label{2nd}
\int_{\Omega}\int_{\mathbb{R^N}\setminus \Omega}\frac{|u_n(x)-u_n(y)|^m}{|x-y|^{N+mt}}\frac{dxdy}{|x|^\alpha|y|^\alpha}.
\end{equation}
In a similar way we can prove 
\begin{equation}\label{3rd}
\int_{\mathbb{R^N}\setminus \Omega}\int_{ \Omega}\frac{|u_n(x)-u_n(y)|^m}{|x-y|^{N+mt}}\frac{dxdy}{|x|^\alpha|y|^\alpha}.
\end{equation}
	With the consideration of $\eqref{1st}$, $\eqref{2nd}$ and $\eqref{3rd}$, the claim given in $\eqref{1st 1st}$ is proved which concludes the proof.
	\end{proof}
\end{lemma}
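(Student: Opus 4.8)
\noindent The plan is to obtain both assertions by inserting suitable bounded test functions into the weak formulation \eqref{weak formula approx} of the approximate problem, then invoking the weighted Sobolev inequality of Theorem \ref{weighted sobo ineq} and the algebraic inequalities of Remark \ref{inequalities}; throughout, every constant produced must be traced to depend only on $\|g\|_{L^1(\Omega)}$, $\|f\|_{L^q(\Omega)}$ and the structural data $K_1,K_2,M,N,\gamma,\theta$, and never on $n$. For the first claim I would test \eqref{weak formula approx} with $\phi=T_k(u_n)$, $k\ge 1$: by Remark \ref{inequalities}(4) the left-hand side dominates $\int_Q|T_k(u_n(x))-T_k(u_n(y))|^p\,d\mu$, while on the right $\int_\Omega g_nT_k(u_n)\le k\|g\|_{L^1(\Omega)}$ and the singular term is split over $\{u_n+\tfrac{1}{n}<M\}$, $\{M\le u_n+\tfrac{1}{n}\le N\}$, $\{u_n+\tfrac{1}{n}>N\}$ --- on the first set $h_n(u_n+\tfrac{1}{n})T_k(u_n)\le K_1M^{1-\gamma}$ by \eqref{eqq1} and $\gamma<1$; on the second $h_n$ is bounded and $T_k(u_n)\le k$; on the third $h_n(u_n+\tfrac{1}{n})\le K_2N^{-\theta}$ by \eqref{eqqq}. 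Since $f\in L^q(\Omega)\subset L^1(\Omega)$ the right-hand side is $\le C_1k$ with $C_1$ independent of $n$, so $\int_Q|T_k(u_n(x))-T_k(u_n(y))|^p\,d\mu\le C_1k$; Theorem \ref{weighted sobo ineq} then gives $\big(\int_\Omega|T_k(u_n)|^{p_s^*}|x|^{-2\alpha p_s^*/p}\,dx\big)^{p/p_s^*}\le S^{-1}C_1k$, and restricting to $\{|T_k(u_n)|=k\}$ (using boundedness of $\Omega$, so that $|x|^{-2\alpha}\le C|x|^{-2\alpha p_s^*/p}$ there) yields $\nu(\{|u_n|\ge k\})\le Ck^{-p_s^*(p-1)/p}=Ck^{-N(p-1)/(N-sp)}$. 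Thus $\{u_n\}$ is bounded in $M^{N(p-1)/(N-sp)}(\Omega,d\nu)$ (Definition \ref{weighted marcin}) and, by \eqref{continuous}, in $L^{m_1}(\Omega,d\nu)$ for every $m_1<\tfrac{N(p-1)}{N-sp}$, with $\{|u_n|^{p-2}u_n\}$ bounded in $L^{m_2}(\Omega,d\nu)$ for $m_2<\tfrac{N}{N-sp}$.

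\textbf{The seminorm estimate on $\Omega\times\Omega$.} Next, fixing $\beta>0$ in the admissible range of [Lemma 3.2, \cite{Abdellaoui 1}], I would test \eqref{weak formula approx} with the bounded function $w_n=T_1(u_n)-(u_n+1)^{-\beta}$. Discarding the nonpositive contribution of $-(u_n+1)^{-\beta}$ on the right and repeating the splitting above with $T_1$ in place of $T_k$ shows the right-hand side is bounded independently of $n$. On the left the bilinear form splits into a $T_1$-part, nonnegative by Remark \ref{inequalities}(4), and a part generated by $(u_n+1)^{-\beta}$ which, by Remark \ref{inequalities}(2)--(3), dominates a fixed multiple of $\int_Q\frac{|(u_n+1)^{(p+\beta-1)/p}(x)-(u_n+1)^{(p+\beta-1)/p}(y)|^p}{(u_n(x)+1)^\beta(u_n(y)+1)^\beta}\,d\mu$; hence this quantity is $\le C$, which is \eqref{1st 1}. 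Inserting \eqref{1st 1} into [Lemma 3.2, \cite{Abdellaoui 1}] gives $\int_\Omega\int_\Omega\frac{|u_n(x)-u_n(y)|^m}{|x-y|^{N+tm}}\frac{dx\,dy}{|x|^\alpha|y|^\alpha}\le C$ for every $m<\tfrac{N(p-1)}{N-s}$, provided $0<t<s$ is chosen close enough to $s$ that $\tfrac{mp(s-t)}{p-m}<\alpha$.

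\textbf{Passage from $\Omega\times\Omega$ to $Q$, and the main difficulty.} Finally, since $u_n\equiv0$ on $\Omega^c$, on $\Omega\times\Omega^c$ the integrand reduces to $|u_n(x)|^m|x-y|^{-N-tm}|x|^{-\alpha}|y|^{-\alpha}$; choosing $R$ so large that $|x-y|\sim|y|$ whenever $x\in\Omega$ and $y\notin B_R$, the part over $\Omega\times(\mathbb{R}^N\setminus B_R)$ is bounded by $\big(\int_\Omega|u_n(x)|^m|x|^{-\alpha}\,dx\big)\big(\int_{\mathbb{R}^N\setminus B_R}|y|^{-N-mt-\alpha}\,dy\big)$, which is finite by the first step (note $m<\tfrac{N(p-1)}{N-s}<\tfrac{N(p-1)}{N-sp}$ and that $0\le\alpha$ with $\Omega$ bounded), while the part over $\Omega\times(B_R\setminus\Omega)$ is controlled by a bounded kernel; the region $\Omega^c\times\Omega$ is symmetric, and summing the three contributions gives \eqref{1st 1st}. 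I expect the seminorm step to be the crux: one has to choose $\beta$ so that [Lemma 3.2, \cite{Abdellaoui 1}] is applicable and so that the purely weighted-quotient bound \eqref{1st 1} --- which is all that is available, because $\{u_n\}$ need not remain bounded in the energy space $X_0^{s,p,\alpha}(\Omega)$ --- genuinely upgrades into a gain of fractional differentiability, with the exponents $m<\tfrac{N(p-1)}{N-s}$, $0<t<s$ and $\tfrac{mp(s-t)}{p-m}<\alpha$ mutually compatible; the rest is bookkeeping with truncations and the structural bounds on $h$.
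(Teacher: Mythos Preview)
Your proposal is correct and follows essentially the same approach as the paper: testing \eqref{weak formula approx} with $T_k(u_n)$ to obtain the Marcinkiewicz/$L^{m_1}$ bound via Theorem~\ref{weighted sobo ineq}, then testing with $w_n=T_1(u_n)-(u_n+1)^{-\beta}$ and invoking [Lemma~3.2, \cite{Abdellaoui 1}] for the $\Omega\times\Omega$ seminorm estimate, and finally extending to $Q$ by the $B_R$-splitting of $\Omega^c$. The three-region decomposition of the singular term via \eqref{eqq1}--\eqref{eqqq}, the constraint $\frac{mp(s-t)}{p-m}<\alpha$, the use of Remark~\ref{inequalities}(2)--(4) to pass to the weighted quotient \eqref{1st 1}, and the far/near treatment of $\Omega\times\Omega^c$ all coincide with the paper's argument.
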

	\noindent We now prove our first main result, i.e. the existence of positive weak solution to $\eqref{ent}$ stated in Theorem $\ref{exist weak}$.
	\begin{proof}[Proof of Theorem $\ref{exist weak}$]
	 According to Lemma $\ref{bounded}$, the sequence $\{T_k(u_n)\}$ is bounded in $X_0^{s,p,\alpha}(\Omega)$ and $\{|u_n|^{p-2}u_n\}$ is bounded in $L^{m_2}(\Omega,d\nu)$,  for every $ m_2<\frac{N}{N-sp}$. Thus, there exists a measurable function $u$ such that, upto a subsequential level, $T_k(u_n)\rightarrow T_k(u)$ weakly in $X_0^{s,p,\alpha}(\Omega)$, $|u|^{p-2}u\in L^{m_2}(\Omega,d\nu)$ and $T_k(u)\in X_0^{s,p,\alpha}(\Omega)$. Hence, $u_n\rightarrow u$ a.e. in $\Omega$ and $u\equiv 0$ in $\mathbb{R}^N\setminus\Omega$. \\
	Let us fix the following notations: $$\Phi(x,y)=\phi(x)-\phi(y),~ I_n(x,y)=|u_n(x)-u_n(y)|^{p-2}(u_n(x)-u_n(y)),$$ $$I(x,y)=|u(x)-u(y)|^{p-2}(u(x)-u(y)),~d\mu=\frac{1}{|x-y|^{N+sp}}\frac{dx dy}{|x|^\alpha|y|^\alpha}.$$
	Then, from $\eqref{weak formula approx}$ we have
	\begin{align}
	\int_{Q} I_n(x,y)\Phi(x,y)d\mu&=\int_{\Omega}f_n h_n(u_n+1/{n})\phi+\int_{\Omega}g_n \phi,~\forall \phi\in C_c^\infty(\Omega).\nonumber
	\end{align}
	On rewriting the above equation, we obtain
	\begin{align}\label{entropy2}
	\int_{Q} I(x,y)\Phi(x,y)d\mu+\int_{Q} (I_n(x,y)-I(x,y))\Phi(x,y)d\mu= \int_{\Omega}f_n h_n(u_n+1/{n})\phi+\int_{\Omega}g_n \phi.
	\end{align}
	Since $g_n\rightarrow g$ strongly in $L^1(\Omega)$, we have $$\lim\limits_{n\rightarrow\infty}\int_{\Omega}g_n\phi=\int_{\Omega}g \phi,~\forall \phi\in C_c^\infty(\Omega).$$
	Now with the help of the dominated convergence theorem and $\eqref{positive}$, we are able to pass the limit $n\rightarrow \infty$ in the following integral.
	$$\lim\limits_{n\rightarrow\infty}\int_{\Omega}f_n h_n(u_n+1/{n})\phi=\int_{\Omega}h(u)f\phi.$$
We express
	\begin{align}
	\int_{Q} (I_n(x,y)-I(x,y))\Phi(x,y)d\mu&=\int_{\Omega\times\Omega} (I_n(x,y)-I(x,y))\Phi(x,y)d\mu\nonumber\\&~~+\int_{(\mathbb{R}^{N}\setminus\Omega)\times\Omega} (I_n(x,y)-I(x,y))\Phi(x,y)d\mu\nonumber\\&~~+\int_{\Omega\times(\mathbb{R}^{N}\setminus\Omega)} (I_n(x,y)-I(x,y))\Phi(x,y)d\mu\nonumber\\&=T_{1,n}+T_{2,n}+T_{3,n}.\nonumber
	\end{align}
	Clearly, $I_n\rightarrow I$ a.e. in $\mathbb{R}^N\times\mathbb{R}^N$. By the Vitali's lemma and Lemma $\ref{a priori}$, $I_n\rightarrow I$ strongly in $L^1(\Omega\times \Omega, d\mu)$. Therefore, $T_{1,n}\rightarrow 0$ as $n\rightarrow\infty$. \\
	Consider $(x,y)\in (B_R\setminus\Omega)\times\Omega$, then $$\underset{(x,y)\in (B_R\setminus\Omega)\times\Omega}{\sup}\left\{\frac{1}{|x-y|^{N+sp}}\right\}\leq C<\infty$$
	and $$\left|\frac{(I_n(x,y)-I(x,y))\Phi(x,y)}{|x-y|^{N+sp}|x|^\alpha|y|^\alpha}\right|\leq C \frac{(|u_n(y)|^{p-1}+|u(y)|^{p-1})|\phi(y)|}{|x|^\alpha|y|^\alpha}=C J_n(x,y).$$
	Denote $J(x,y)=\frac{2|u(y)|^{p-1}|\phi(y)|}{|x|^\alpha|y|^\alpha}$. Then, $J_n\rightarrow J$ strongly in $L^1((B_R\setminus\Omega)\times\Omega)$.
	 Hence, by the dominated convergence theorem, $T_{2,n}\rightarrow 0$ as $n\rightarrow\infty$ and similarly we can also prove that $T_{3,n}\rightarrow 0$ as $n\rightarrow \infty$. Hence, on passing the limit $n\rightarrow \infty$ in $\eqref{entropy2}$, we have
	\begin{align}\label{3rd 3rd}
	\int_{Q} I(x,y)\Phi(x,y)d\mu= \int_{\Omega}h(u)f\phi+\int_{\Omega} g \phi,~\forall \phi\in C_c^\infty(\Omega).
	\end{align}
Further, $\eqref{2nd 2nd}$ follows from $\eqref{1st 1st}$ and the Fatou's lemma. Finally, with the consideration of Lemma $\ref{a priori}$ and $\eqref{3rd 3rd}$, it is proved that $u$ is a weak solution to $\eqref{ent}$ in the sense of Definition $\ref{weak}$. Hence the proof.
	\end{proof}
	\begin{remark}
	If we assume $g$ to be a bounded Radon measure, then the above existence result, i.e. Theorem $\ref{exist weak}$, holds.
	\end{remark}
	\section{Existence of entropy solution - Proof of Theorem $\ref{exist entropy}$}\label{section entropy}
  The problem $\eqref{ent}$ admits a positive weak solution by Theorem $\ref{exist weak}$. According to Remark $\ref{entropy is weak}$, every entropy solution of $\eqref{ent}$ is also a weak solution to $\eqref{ent}$, i.e. the entropy solution satisfies $\eqref{weak form}$. The following lemma is a consequence of Lemma 3.8 of \cite{Abdellaoui 1}, which says that every entropy solution to $\eqref{ent}$ satisfies $\eqref{weak form}$ for a larger class of test function space.  
  \begin{lemma}[Lemma 3.8, \cite{Abdellaoui 1}]\label{another weak form}
  	Suppose $u$ is a positive entropy solution to $\eqref{ent}$. Then, $u$ satisfies 
  	$$\int_{Q}|u(x)-u(y)|^{p-2}(u(x)-u(y))(\psi(x)-\psi(y))d\mu=\int_{\Omega}fh(u)\psi+\int_{\Omega}g \psi,$$ 
  	for every $\psi\in X_0^{s,p,\alpha}(\Omega)\cap L^\infty(\Omega)$ s.t. $\psi\equiv0$ in the set $\{u>l\}$ for some $l>0$.
  \end{lemma}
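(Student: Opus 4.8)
The plan is to derive the identity from the entropy inequality \eqref{condition 2} by inserting, for a parameter $j>l$, the admissible test functions $\varphi_j^{\pm}=T_j(u)\mp\psi\in X_0^{s,p,\alpha}(\Omega)\cap L^\infty(\Omega)$, and then letting $j\to\infty$. The reason for subtracting $\psi$ from the truncation $T_j(u)$ is that $u-\varphi_j^{\pm}=G_j(u)\pm\psi$ (with $G_j(u)=(u-j)^{+}$, since $u>0$); because $j>l$, the function $G_j(u)$ is supported in $\{u>j\}\subseteq\{u>l\}$ while $\psi$ is supported in $\{u\le l\}$, so these supports are disjoint and, for every $k>\|\psi\|_{L^\infty(\Omega)}$, one has the pointwise identity $T_k(u-\varphi_j^{\pm})=T_k(G_j(u))\pm\psi$.

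Substituting $\varphi_j^{+}$ into \eqref{condition 2}, expanding $T_k(u-\varphi_j^{+})=T_k(G_j(u))+\psi$ and rearranging, one obtains
\[
\int_{Q}I_u(x,y)\bigl(\psi(x)-\psi(y)\bigr)\,d\mu\ \le\ \int_{\Omega}fh(u)\psi+\int_{\Omega}g\psi+E_{j,k},
\]
where $I_u(x,y)=|u(x)-u(y)|^{p-2}(u(x)-u(y))$ and
\[
E_{j,k}=\int_{\Omega}fh(u)T_k(G_j(u))+\int_{\Omega}g\,T_k(G_j(u))-\int_{Q}I_u(x,y)\bigl(T_k(G_j(u))(x)-T_k(G_j(u))(y)\bigr)\,d\mu.
\]
Since $v\mapsto T_k(G_j(v))$ is non-decreasing, the last integrand is non-negative, so $E_{j,k}\le\int_{\Omega}fh(u)T_k(G_j(u))+\int_{\Omega}g\,T_k(G_j(u))$. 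As $T_k(G_j(u))$ vanishes on $\{u\le j\}$ and $0\le T_k(G_j(u))\le k$, this is at most $k\bigl(\int_{\{u>j\}}fh(u)+\int_{\{u>j\}}g\bigr)$, which tends to $0$ as $j\to\infty$ by absolute continuity of the integral, using $fh(u),g\in L^1(\Omega)$ and $|\{u>j\}|\to0$. Letting $j\to\infty$ with $k$ fixed therefore gives $\int_{Q}I_u(\psi(x)-\psi(y))\,d\mu\le\int_{\Omega}fh(u)\psi+\int_{\Omega}g\psi$. Since $-\psi$ is again an admissible test function, applying this inequality to $-\psi$ yields the reverse inequality, and hence the claimed equality.

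The step needing the most care is justifying that every integral above is finite, so that the expansions and rearrangements are legitimate; this is precisely where property \eqref{condition 1} is used. To see that $\int_{Q}I_u(\psi(x)-\psi(y))\,d\mu$ is finite, split $Q$ according to whether $\max\{u(x),u(y)\}\le l+1$ or not. On $\{\max\{u(x),u(y)\}\le l+1\}$ one has $|u(x)-u(y)|=|T_{l+1}(u)(x)-T_{l+1}(u)(y)|$, so by H\"{o}lder's inequality the contribution is at most $\|T_{l+1}(u)\|_{X_0^{s,p,\alpha}(\Omega)}^{p-1}\|\psi\|_{X_0^{s,p,\alpha}(\Omega)}<\infty$, since $u\in\mathcal{T}_0^{s,p,\alpha}(\Omega)$ and $\psi\in X_0^{s,p,\alpha}(\Omega)$. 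On the complement, $\psi(x)\neq\psi(y)$ forces $\min\{u(x),u(y)\}\le l$, so that region lies in $D_l$ and contributes at most $2\|\psi\|_{L^\infty(\Omega)}\int_{D_l}|u(x)-u(y)|^{p-1}\,d\mu<\infty$ by \eqref{condition 1}. An analogous splitting, with $D_j$ and $D_{j+k}$ replacing $D_l$, shows that $\int_{Q}I_u(T_k(G_j(u))(x)-T_k(G_j(u))(y))\,d\mu$ is finite for each $j$. I expect this bookkeeping of the tail integrals to be the only genuine difficulty; everything else follows directly from the entropy inequality, the disjointness of the supports, the monotonicity of $T_k\circ G_j$, and dominated convergence---this is essentially the argument of [Lemma 3.8, \cite{Abdellaoui 1}] with the datum $fh(u)+g$ in place of the $L^1$ datum there.
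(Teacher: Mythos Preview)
The paper does not give its own proof of this lemma; it simply quotes it as Lemma~3.8 of \cite{Abdellaoui 1} with the datum $fh(u)+g$ in place of the $L^1$ datum there. Your argument---testing the entropy inequality with $\varphi_j^{\pm}=T_j(u)\mp\psi$, exploiting the disjoint supports of $G_j(u)$ and $\psi$ for $j>l$, dropping the non-negative $T_k(G_j(u))$ term, and then replacing $\psi$ by $-\psi$---is exactly the standard proof of that cited result, and it is correct.
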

\noindent Let $u_n$ be a  weak solution to $\eqref{ent approx}$. Hence, the sequence $\{u_n\}$ is an increasing sequence w.r.t $n$, by Lemma $\ref{a priori}$. Further, $\eqref{truncation bound}$ implies that $\{T_k(u_n)\}$ is a uniformly bounded sequence in $X_0^{s,p,\alpha}(\Omega)$ for every $k>0$. Before proving the existence of an entropy solution, we provide the following compactness result.
\begin{lemma}[Lemma 3.6, \cite{Abdellaoui 1}]\label{strong}
	Let the sequence $\{u_n\}\subset X_0^{s,p,\alpha}(\Omega)$ be a positive increasing sequence w.r.t $n$ and $(-\Delta)^s_{p,\alpha}u_n\geq0$ in $\Omega$. Furthermore, assume the sequence $\{T_k(u_n)\}$ is uniformly bounded in $X_0^{s,p,\alpha}(\Omega)$ for every $k>0$. Then, there exists $u$ such that $u_n\uparrow u$ a.e. in $\Omega$, $T_k(u)\in X_0^{s,p,\alpha}(\Omega)$ and $$T_k(u_n)\rightarrow T_k(u) \text{ strongly in } X_0^{s,p,\alpha}(\Omega).$$
\end{lemma}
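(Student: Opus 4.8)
The plan is to combine three things: the monotonicity of $\{u_n\}$ in $n$ (which upgrades several weak limits to pointwise/monotone ones), the reflexivity and uniform convexity of $X_0^{s,p,\alpha}(\Omega)$, and the monotonicity of $(-\Delta)^s_{p,\alpha}$ encoded in Remark \ref{inequalities}. First I would set $u:=\lim_n u_n$, which exists a.e.\ (valued in $[0,+\infty]$) because $u_n\uparrow$, vanishes outside $\Omega$, and satisfies $T_k(u_n)\uparrow T_k(u)$ a.e.\ for every $k>0$. From $\sup_n\|T_k(u_n)\|_{X_0^{s,p,\alpha}(\Omega)}<\infty$ and the embedding $X_0^{s,p,\alpha}(\Omega)\hookrightarrow L^p(\Omega,d\nu)$ of Theorem \ref{weighted sobo ineq} I obtain a uniform $L^p(\Omega,d\nu)$-bound on $\{T_k(u_n)\}$; by monotone convergence this passes to $T_k(u)$, and estimating $k^p\nu(\{u\ge k\})\le\int_\Omega|T_k(u)|^p\,d\nu$ gives $u<\infty$ a.e. Fixing $k$, reflexivity provides a subsequence of $\{T_k(u_n)\}$ converging weakly in $X_0^{s,p,\alpha}(\Omega)$; since $0\le T_k(u)-T_k(u_n)\downarrow 0$ with $T_k(u)-T_k(u_1)\in L^p(\Omega,d\nu)$, monotone convergence also gives $T_k(u_n)\to T_k(u)$ strongly in $L^p(\Omega,d\nu)$, which identifies the weak limit as $T_k(u)$. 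Hence $T_k(u)\in X_0^{s,p,\alpha}(\Omega)$ (so $u\in\mathcal T_0^{s,p,\alpha}(\Omega)$) and, by uniqueness of the limit, $T_k(u_n)\rightharpoonup T_k(u)$ in $X_0^{s,p,\alpha}(\Omega)$ along the whole sequence.

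For the strong convergence, since $X_0^{s,p,\alpha}(\Omega)$ is uniformly convex it suffices to prove $\limsup_n\|T_k(u_n)\|_{X_0^{s,p,\alpha}(\Omega)}^p\le\|T_k(u)\|_{X_0^{s,p,\alpha}(\Omega)}^p$, the reverse inequality being weak lower semicontinuity. Write $I_p(v)(x,y)=|v(x)-v(y)|^{p-2}(v(x)-v(y))$, $\delta_n=T_k(u_n)-T_k(u)$, and $\varphi_n=-\delta_n=T_k(u)-T_k(u_n)$; the monotonicity of $\{u_n\}$ makes $\varphi_n\ge0$, and $\varphi_n\in X_0^{s,p,\alpha}(\Omega)\cap L^\infty(\Omega)$ with $\varphi_n\rightharpoonup 0$ in $X_0^{s,p,\alpha}(\Omega)$ and $\varphi_n\to0$ a.e. For $p\ge2$, Remark \ref{inequalities}(5) gives
\[
2^{2-p}\|\delta_n\|_{X_0^{s,p,\alpha}(\Omega)}^p\le\int_Q\big(I_p(T_k u_n)-I_p(T_k u)\big)\big(\delta_n(x)-\delta_n(y)\big)\,d\mu=:X_n,
\]
and the case $1<p<2$ is handled analogously using Remark \ref{inequalities}(6) and H\"{o}lder's inequality. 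Splitting $X_n=Y_n-Z_n$ with $Z_n=\int_Q I_p(T_k u)(\delta_n(x)-\delta_n(y))\,d\mu$, one has $Z_n\to0$ since $I_p(T_k u)\in L^{p'}(Q,d\mu)$ is fixed while $\delta_n(x)-\delta_n(y)\rightharpoonup0$ in $L^p(Q,d\mu)$. For $Y_n=\int_Q I_p(T_k u_n)(\delta_n(x)-\delta_n(y))\,d\mu$ I use that $(-\Delta)^s_{p,\alpha}u_n\ge0$ tested against $\varphi_n\ge0$, which gives $\int_Q I_p(u_n)(\varphi_n(x)-\varphi_n(y))\,d\mu\ge0$, i.e.\ $\int_Q I_p(u_n)(\delta_n(x)-\delta_n(y))\,d\mu\le0$; adding this nonpositive quantity yields
\[
Y_n\le\int_Q\big(I_p(T_k u_n)-I_p(u_n)\big)\big(\delta_n(x)-\delta_n(y)\big)\,d\mu,
\]
and the integrand on the right is supported on the transition set $\mathcal D_n=\{\min(u_n(x),u_n(y))\le k<\max(u_n(x),u_n(y))\}$, because on $\{u_n(x),u_n(y)\le k\}$ one has $I_p(T_k u_n)=I_p(u_n)$, while on $\{u_n(x),u_n(y)>k\}$ one has $T_k(u_n)=T_k(u)=k$ at both points, so $\delta_n(x)-\delta_n(y)=0$.

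The hard part will be the final step: showing $\int_{\mathcal D_n}|I_p(T_k u_n)-I_p(u_n)|\,|\delta_n(x)-\delta_n(y)|\,d\mu\to0$. On $\mathcal D_n$, say with $u_n(x)>k\ge u_n(y)$ (the other case symmetric), one has $\delta_n(x)=0$, $0\le-\delta_n(y)=\min(u(y),k)-u_n(y)\le k$, and $-\delta_n(y)\to0$ a.e.\ on the set where the pair stays in $\mathcal D_n$ (which forces $u(y)\le k$), while $|I_p(T_k u_n)-I_p(u_n)|\le C\big(|u_n(x)-u_n(y)|^{p-1}+k^{p-1}\big)$. The remaining issue is to control, uniformly in $n$, the contribution of the long-range nonlocal interactions in $\mathcal D_n$ (where $u_n(x)$ is large and $u_n(y)\le k$) — this is exactly the nonlocal truncation phenomenon behind condition \eqref{condition 1} of Definition \ref{entropy}, now needed uniformly in $n$; I would handle it by exploiting the uniform boundedness of $\{T_j(u_n)\}$ in $X_0^{s,p,\alpha}(\Omega)$ at \emph{all} levels $j>k$, and the resulting uniform bounds of $\{u_n\}$ in the weighted Marcinkiewicz spaces of Definition \ref{weighted marcin}, to make the $d\mu$-mass of the "far" part of $\mathcal D_n$ arbitrarily small uniformly in $n$, while on the complementary "near" part I would invoke $u_n\to u$ a.e., $\delta_n\to0$ a.e., and Vitali's lemma. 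Granting this, $\limsup_n Y_n\le0$, hence $\limsup_n X_n\le0$; since $X_n\ge0$ this forces $X_n\to0$, so $T_k(u_n)\to T_k(u)$ strongly in $X_0^{s,p,\alpha}(\Omega)$ for every $k>0$, which is the assertion.
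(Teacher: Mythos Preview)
The paper does not give its own proof of this lemma; it is quoted verbatim as Lemma~3.6 of \cite{Abdellaoui 1} and used as a black box in the proof of Theorem~\ref{exist entropy}. There is therefore no in-paper argument to compare your proposal against.

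On the merits of your proposal: the strategy is correct and in fact cleaner than you realize. The step you flag as ``the hard part'' --- controlling
\[
\int_{\mathcal D_n}\big(I_p(T_ku_n)-I_p(u_n)\big)\big(\delta_n(x)-\delta_n(y)\big)\,d\mu
\]
--- requires no Marcinkiewicz or Vitali machinery, because the integrand is \emph{pointwise nonpositive} on $\mathcal D_n$. Indeed, if $u_n(x)>k\ge u_n(y)$ then $u(x)\ge u_n(x)>k$ forces $\delta_n(x)=0$, while $\delta_n(y)=u_n(y)-\min(u(y),k)\le 0$, so $\delta_n(x)-\delta_n(y)\ge 0$; on the other hand $I_p(T_ku_n)(x,y)=(k-u_n(y))^{p-1}<(u_n(x)-u_n(y))^{p-1}=I_p(u_n)(x,y)$, so the first factor is $\le 0$. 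The case $u_n(y)>k\ge u_n(x)$ is symmetric. Hence the whole integral is $\le 0$, giving $Y_n\le 0$ outright. Combined with $Z_n\to 0$ and $X_n\ge 0$ (monotonicity of $t\mapsto|t|^{p-2}t$), you obtain $0\le X_n\le -Z_n+o(1)\to 0$, and strong convergence follows for all $p>1$ via Remark~\ref{inequalities}(5)--(6) and H\"older as you indicated. So your argument is complete once this sign observation is made; the tail analysis you sketched is unnecessary. (As a minor remark, the detour through uniform convexity and $\limsup$ of norms is also unnecessary, since the $X_n$ estimate already yields $\|T_k(u_n)-T_k(u)\|_{X_0^{s,p,\alpha}(\Omega)}\to 0$ directly.)
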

\noindent We now prove Theorem $\ref{exist entropy}$ and show the equivalence between the weak solution and the entropy solution to $\eqref{ent}$.
	\begin{proof}[Proof of Theorem $\ref{exist entropy}$.]
		The proof of this theorem is divided into the following two parts.\\
\textbf{Part 1:} There exists a positive entropy solution to $\eqref{ent}$ in the sense of Definition $\ref{entropy}$.\\	
By Theorem $\ref{exist weak}$, we guarantee the existence of a weak solution $u$ to $\eqref{ent}$. Then, $\eqref{truncation bound}$ and Lemma $\ref{strong}$ imply that $T_k(u_n)\rightarrow T_k(u)$ strongly in $X_0^{s,p,\alpha}(\Omega)$.	\\
Let us take $\phi=T_1(G_l(u_n))$, for a fixed $l>0$, in $\eqref{weak formula approx}$ and we get
\begin{align}\label{entropy 3}
 \int_{Q}|u_n(x)-u_n(y)|^{p-2}&(u_n(x)-u_n(y))(T_1(G_l(u_n))(x)-T_1(G_l(u_n))(y))d\mu\nonumber\\&=\int_{\Omega}f_nh_n(u_n+1/n)T_1(G_l(u_n))+\int_{\Omega}g_n T_1(G_l(u_n))\nonumber\\&\leq \int_{\{u_n\geq l\}}f_n h_n(l+1/n)+\int_{\{u_n\geq l\}}g_n.
  \end{align}
 Consider the set $D_l$ as given in Definition $\ref{entropy}$. Thus, for $(x,y)\in D_l$ it is easy to show that
  \begin{align}\label{D_l}
   |u_n(x)-u_n(y)|^{p-1}\leq  |u_n(x)-u_n(y)|^{p-2}&(u_n(x)-u_n(y))(T_1(G_l(u_n))(x)-T_1(G_l(u_n))(y)).
  \end{align}
By considering $\eqref{entropy 3}$, $\eqref{D_l}$ and using the Fatou's lemma, we have
  \begin{align}\label{entropy 4}
 \int_{D_l}|u(x)-u(y)|^{p-1} d\mu& \leq  \lim\limits_{n\rightarrow\infty}\inf  \left\{\int_{D_l}|u_n(x)-u_n(y)|^{p-1} d\mu\right\}\nonumber\\& \leq \int_{\{u\geq l\}}f h(l)+\int_{\{u\geq l\}}g.\nonumber
  \end{align}
 In fact,
 $$\int_{\{u\geq l\}}f h(l)+\int_{\{u\geq l\}}g\rightarrow 0 ~\text{ as }~ l\rightarrow \infty.$$ 
 Hence, we establish $\eqref{condition 1}$, i.e. we have
 $$ \int_{D_l}|u(x)-u(y)|^{p-1} d\mu\rightarrow 0~\text{ as }~ l\rightarrow \infty.$$
 Now it remains to prove $\eqref{condition 2}$. For this, we consider  $\varphi\in X_0^{s,p,\alpha}(\Omega)\cap L^\infty(\Omega)$. Using $T_k(u_n-\varphi)$ as a test function in $\eqref{weak formula approx}$ and following the notations used in the proof of Theorem $\ref{exist weak}$, we get
\begin{equation}\label{entropy 5}
\int_{Q}I_n(x,y)(T_k(u_n-\varphi)(x)-T_k(u_n-\varphi)(y))d\mu
=\int_{\Omega}f_nh_n(u_n+1/n)T_k(u_n-\varphi)+\int_{\Omega}g_n T_k(u_n-\varphi).
\end{equation}
The integrand in the first term of the above equation $\eqref{entropy 5}$ can be decomposed as
\begin{equation}\label{entropy 6}
I_n(x,y)(T_k(u_n-\varphi)(x)-T_k(u_n-\varphi)(y))= I_{1,n}(x,y)+I_{2,n}(x,y),
\end{equation}
where
\begin{align}
I_{1,n}(x,y)=|(u_n(x)-\varphi(x))-(u_n(y)-\varphi(y))|^{p-2}&\left((u_n(x)-\varphi(x))-(u_n(y)-\varphi(y))\right)\nonumber\\&\times \left[T_k(u_n-\varphi)(x)-T_k(u_n-\varphi)(y)\right]\nonumber
\end{align} and 
\begin{align}
I_{2,n}(x,y)=&\left[I_n(x,y)-|(u_n(x)-\varphi(x))-(u_n(y)-\varphi(y))|^{p-2}\left((u_n(x)-\varphi(x))-(u_n(y)-\varphi(y))\right)\right]\nonumber\\&~~~~\times \left[T_k(u_n-\varphi)(x)-T_k(u_n-\varphi)(y)\right].\nonumber
\end{align}
 Clearly, $I_{1,n}(x,y)\geq 0$ a.e. in $Q$ and 
 \begin{align}
 I_{1,n}(x,y)\rightarrow |(u(x)-\varphi(x))-(u(y)-\varphi(y))|^{p-2}&\left((u(x)-\varphi(x))-(u(y)-\varphi(y))\right)\nonumber\\&\times \left[T_k(u-\varphi)(x)-T_k(u-\varphi)(y)\right] ~\text{ a.e. in}~Q.\nonumber
 \end{align}
 Thus, on using the Fatou's lemma we establish
 \begin{align}\label{entropy 7}
\lim\limits_{n\rightarrow \infty}\inf\left\{\int_{Q} I_{1,n}(x,y)d\mu\right\}\geq&\int_{Q} |(u(x)-\varphi(x))-(u(y)-\varphi(y))|^{p-2}\nonumber\\&~\left((u(x)-\varphi(x))-(u(y)-\varphi(y))\right)\times \left[T_k(u-\varphi)(x)-T_k(u-\varphi)(y)\right] d\mu.
 \end{align}
 According to Abdellaoui et al. \cite{Abdellaoui 1}, 
\begin{align}\label{entropy 8}
\lim\limits_{n\rightarrow \infty}&\int_{Q} I_{2,n}(x,y)d\mu\nonumber\\&= \int_{Q}\left[I(x,y)-|(u(x)-\varphi(x))-(u(y)-\varphi(y))|^{p-2}\left((u(x)-\varphi(x))-(u(y)-\varphi(y))\right)\right]\nonumber\\&~~~~~\times \left[T_k(u-\varphi)(x)-T_k(u-\varphi)(y)\right] d\mu.
\end{align}
On the other hand, by using the dominated convergence theorem, Lemma $\ref{a priori}$ and the strong convergence $T_k(u_n)\rightarrow T_k(u)$ in $X_0^{s,p,\alpha}(\Omega)$, we observe that
\begin{equation}\label{entropy 9}
\int_{\Omega}f_nh_n(u_n+1/n)T_k(u_n-\varphi)+\int_{\Omega}g T_k(u_n-\varphi)\rightarrow\int_{\Omega}fh(u)T_k(u-\varphi)+\int_{\Omega}g T_k(u-\varphi),~\text{as}~n\rightarrow\infty.
\end{equation}
Therefore, on combining the results from $\eqref{entropy 6}-\eqref{entropy 9}$ and then passing the limit $n\rightarrow\infty$ in $\eqref{entropy 5}$, we conclude
\begin{equation}
\int_{Q}I(x,y)(T_k(u-\varphi)(x)-T_k(u-\varphi)(y))d\mu
\leq\int_{\Omega}fh(u)T_k(u-\varphi)+\int_{\Omega} g T_k(u-\varphi),
\end{equation}
for every $\varphi\in X_0^{s,p,\alpha}(\Omega)\cap L^\infty(\Omega)$. Thus, $\eqref{condition 2}$ holds.\\
\textbf{Part 2:} Uniqueness of entropy solutions.\\
Let $u$ be the positive entropy solution obtained from Part 1 of this proof and $u_n$ be the unique solution to $\eqref{ent approx}$. Then, from Lemma $\ref{strong}$, $u=\lim\limits_{n\rightarrow \infty}\sup \{u_n\}$. \\
We prove this theorem by method of contradiction. For that, suppose $\bar{u}$ is another entropy solution to $\eqref{ent}$. Let us fix $n$ and define $\psi_n=(u_n-\bar{u})^+$. Thus, for $k\gg \|u_n\|_{L^{\infty}(\Omega)}$, $\psi_n=(u_n-T_k(\bar{u}))^+$. This implies $\psi_n\in X_0^{s,p,\alpha}(\Omega)\cap L^\infty(\Omega)$ and $\psi_n\equiv 0$ in the set $\{\bar{u}>\|u_n\|_{L^{\infty}(\Omega)}\}$. Hence, on choosing $\phi=\psi_n$ in $\eqref{weak formula approx}$, we have
\begin{align}\label{entropy 10}
\int_{Q}|u_n(x)-u_n(y)|^{p-2}(u_n(x)-u_n(y))(\psi_n(x)-\psi_n(y))d\mu&=\int_{\Omega}f_nh_n(u_n+1/n)\psi_n+\int_{\Omega}g_n \psi_n\nonumber\\&\leq \int_{\Omega}f h(u_n)\psi_n+\int_{\Omega}g \psi_n.
\end{align}
Since $\bar{u}$ is an entropy solution to $\eqref{ent}$, by Lemma $\ref{another weak form}$, we get
\begin{align}\label{entropy 11}
\int_{Q}|\bar{u}(x)-\bar{u}(y)|^{p-2}(\bar{u}(x)-\bar{u}(y))(\psi_n(x)-\psi_n(y))d\mu& = \int_{\Omega}f h(\bar{u})\psi_n+\int_{\Omega}g \psi_n.
\end{align}
Subtracting $\eqref{entropy 11}$ from $\eqref{entropy 10}$ we reach that
\begin{align}
0 &\leq C \int_{Q}|\psi_n(x)-\psi_n(y)|^p\nonumber\\&\leq \int_{Q}\left(|u_n(x)-u_n(y)|^{p-2}(u_n(x)-u_n(y))-|\bar{u}(x)-\bar{u}(y)|^{p-2}(\bar{u}(x)-\bar{u}(y))\right)(\psi_n(x)-\psi_n(y))d\mu\nonumber\\&\leq \int_\Omega f(h(u_n)-h(\bar{u}))\psi_n\nonumber\\&\leq 0.\nonumber
\end{align}
Therefore, $\psi_n=0$ a.e. in $Q$ and $u_n\leq \bar{u}$ a.e. for every $n$. Use the fact $u=\lim\limits_{n\rightarrow \infty}\sup \{u_n\}$ to obtain
\begin{equation}\label{one side}
u\leq \bar{u} ~\text{a.e in}~Q.
\end{equation}
Further, choose $h\gg k$. Then, the two entropy solutions $u$ and $\bar{u}$ satisfy
\begin{equation}\label{entropy 12}
\int_{Q}I(x,y)[T_k(u(x)-T_h(\bar{u}(x)))-T_k(u_n(y)-T_h(\bar{u}(y)))]d\mu
\leq\int_{\Omega}(fh(u)+g)T_k(u-T_h(\bar{u}))
\end{equation} and
\begin{equation}\label{entropy 13}
\int_{Q}\bar{I}(x,y)[T_k(\bar{u}(x)-T_h(u(x)))-T_k(\bar{u}(y)-T_h(u(y)))]d\mu
\leq\int_{\Omega}(fh(\bar{u})+g)T_k(\bar{u}-T_h(u)),
\end{equation}
where $$\bar{I}(x,y)=|\bar{u}(x)-\bar{u}(y)|^{p-2}(\bar{u}(x)-\bar{u}(y))~\text{ and }~I(x,y)=|u(x)-u(y)|^{p-2}(u(x)-u(y)).$$
Clearly, as $h\rightarrow\infty$,
\begin{equation}\label{entropy 14}
\int_{\Omega}gT_k(u-T_h(\bar{u}))+\int_{\Omega}gT_k(\bar{u}-T_h(u))\rightarrow 0 
\end{equation}
and 
\begin{equation}\label{entropy 15}
\int_{\Omega}fh(u)T_k(u-T_h(\bar{u}))+\int_{\Omega}fh(\bar{u})T_k(\bar{u}-T_h(u))\rightarrow \int_{\Omega}fh(u)T_k(u-\bar{u})+\int_{\Omega}fh(\bar{u})T_k(\bar{u}-u).
\end{equation}
From $\eqref{one side}$, we already have $u\leq \bar{u}$ a.e. in $Q$. Thus, the right hand side of $\eqref{entropy 15}$ becomes
\begin{align}\label{entropy 16}
\int_{\Omega}fh(u)T_k(u-\bar{u})+\int_{\Omega}fh(\bar{u})T_k(\bar{u}-u)&= \int_{\Omega}f(h(u)-h(\bar{u}))T_k(u-\bar{u})\nonumber\\&\leq 0.
\end{align}
Let us define the following two sets:
$$D(h)=\{(x,y)\in Q:u(x)<h,u(y)<h\} \text{ and }\bar{D}(h)=\{(x,y)\in Q:\bar{u}(x)<h,\bar{u}(y)<h\}.$$
Since $u\leq \bar{u}$, we get $u<h$ in the set $\{\bar{u}<h\}$. On combining the inequalities $\eqref{entropy 12}-\eqref{entropy 16}$, we obtain 
\begin{align}\label{entropy 17}
o(h)&\geq \int_{Q}I(x,y)(T_k(u(x)-T_h(\bar{u}(x)))-T_k(u_n(y)-T_h(\bar{u}(y))))d\mu\nonumber\\&~~~~+\int_{Q}\bar{I}(x,y)(T_k(\bar{u}(x)-T_h(u(x)))-T_k(\bar{u}(y)-T_h(u(y))))d\mu\nonumber\\&= \int_{D(h)}I(x,y)(T_k(u(x)-T_h(\bar{u}(x)))-T_k(u_n(y)-T_h(\bar{u}(y))))d\mu\nonumber\\&~~~~+\int_{Q\setminus D(h)}I(x,y)(T_k(u(x)-T_h(\bar{u}(x)))-T_k(u_n(y)-T_h(\bar{u}(y))))d\mu\nonumber\\&~~~~+\int_{\bar{D}(h)}\bar{I}(x,y)(T_k(\bar{u}(x)-u(x))-T_k(\bar{u}(y)-u(y)))d\mu\nonumber\\&~~~~+\int_{Q\setminus \bar{D}(h)}\bar{I}(x,y)(T_k(\bar{u}(x)-T_h(u(x)))-T_k(\bar{u}(y)-T_h(u(y))))d\mu\nonumber
\end{align}
\begin{align}
&=\int_{\bar{D}(h)}[\bar{I}(x,y)-I(x,y)](T_k(\bar{u}(x)-u(x))-T_k(\bar{u}(y)-u(y)))d\mu\nonumber\\&~~~~+\int_{D(h)\setminus \bar{D}(h)}I(x,y)(T_k(u(x)-T_h(\bar{u}(x)))-T_k(u_n(y)-T_h(\bar{u}(y))))d\mu\nonumber\\&~~~~+\int_{Q\setminus D(h)}I(x,y)(T_k(u(x)-h)-T_k(u_n(y)-h))d\mu\nonumber\\&~~~~+ \int_{Q\setminus \bar{D}(h)}\bar{I}(x,y)(T_k(\bar{u}(x)-T_h(u(x)))-T_k(\bar{u}(y)-T_h(u(y))))d\mu\nonumber\\&= J_1(h)+J_2(h)+J_3(h)+J_4(h).
\end{align}
It is easy to check that 
\begin{align}\label{entropy 18}
J_1(h)&=\int_{\bar{D}(h)}[\bar{I}(x,y)-I(x,y)](T_k(\bar{u}(x)-u(x))-T_k(\bar{u}(y)-u(y)))d\mu\nonumber\\&\geq C \int_{\bar{D}(h)} |T_k(\bar{u}(x)-u(x))-T_k(\bar{u}(y)-u(y))|^p d\mu.
\end{align} 
According to Abdellaoui et al. \cite{Abdellaoui 1}, 
\begin{equation}\label{entropy 19}
J_2(h)+J_3(h)+J_4(h)\geq o(h).
\end{equation}
Hence, by the inequalities $\eqref{entropy 17}-\eqref{entropy 19}$, we establish the following.
$$ o(h)+C \int_{\bar{D}(h)} |T_k(\bar{u}(x)-u(x))-T_k(\bar{u}(y)-u(y))|^p d\mu\leq o(h).$$
Now on passing the limit $h\rightarrow \infty$, we conclude that  
$$\int_{\mathbb{R}^N} |T_k(\bar{u}(x)-u(x))-T_k(\bar{u}(y)-u(y))|^p d\mu=0.$$
This implies $T_k(\bar{u}-u)$ is a constant function. Since $u=\bar{u}=0$ in $\mathbb{R}^N\setminus\Omega$, we reach at the conclusion that $u=\bar{u}$ a.e. in $Q$.\\
Thus, from Part 1 and Part 2 of this proof we conclude that $u\in \mathcal{T}^{s,p,\alpha}_0(\Omega)$ is a unique positive entropy solution to $\eqref{ent}$ in the sense of Definition $\ref{entropy}$ and is equivalent to the weak solution as obtained in Theorem $\ref{exist weak}$.
	\end{proof}
	\subsection{Problem $\eqref{ent}$ with a power nonlinearity.}
\noindent We consider the following problem with a singularity, power nonlinearity and an $L^1$ datum. We show that this problem possesses an entropy solution. 
	\begin{equation}\label{ent special}
\begin{split}
(-\Delta)_{p,\alpha}^sw&= \frac{f(x)}{w^\gamma}+w^r+g(x) ~\text{in}~\Omega,~r<p-1,\gamma<1,\\
w&>0~\text{in}~\Omega,\\
w&= 0~\text{in}~\mathbb{R}^N\setminus\Omega.
\end{split}
\end{equation}
The following is the definition of entropy solution to the problem $\eqref{ent special}$.
	\begin{definition}\label{entropy special}
	 A function $w\in \mathcal{T}^{s,p,\alpha}_0(\Omega)$ is said to be an entropy solution of $\eqref{ent special}$ if it satisfies $\eqref{condition 1}$ and 
for every $\varphi\in W^{s,p,\alpha}_0(\Omega)\cap L^\infty(\Omega)$ and every $k>0$,
	\begin{align}\label{condition}
	\int_{Q}|w(x)-w(y)|^{p-2}(w(x)-w(y))&[T_k(w(x)-\varphi(x))-T_k(w(y)-\varphi(y))]d\mu\nonumber\\&\leq\int_{\Omega}\frac{f}{w^\gamma}T_k(w-\varphi)+\int_{\Omega}w^r T_k(w-\varphi)+\int_{\Omega}g T_k(w-\varphi).
	\end{align}
	Further, for every $\omega\subset \subset\Omega$, there exists a $C_\omega>0$ such that $w\geq C_\omega>0$.	
\end{definition}
\begin{theorem}
	Let $r<p-1$ and $0<\gamma<1$. Then, the problem $\eqref{ent special}$ admits a positive entropy solution in $\mathcal{T}_0^{s,p,\alpha}(\Omega)$ in the sense of Definition $\ref{entropy special}$.
\end{theorem}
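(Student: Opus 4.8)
The plan is to mimic the approximation scheme used throughout the paper for the model problem \eqref{ent}, now applied to the right-hand side $\frac{f}{w^\gamma}+w^r+g$. Consider the approximating problems
\begin{equation}
\begin{split}
(-\Delta)_{p,\alpha}^s w_n&= \frac{f_n}{(w_n+1/n)^\gamma}+T_n(w_n^r)+g_n~\text{in}~\Omega,\\
w_n&>0~\text{in}~\Omega,\\
w_n&=0~\text{in}~\mathbb{R}^N\setminus\Omega,
\end{split}
\end{equation}
with $f_n=T_n(f)$ and $g_n\uparrow g$ in $L^1(\Omega)$ as in \eqref{ent approx}. First I would establish, for each fixed $n$, the existence of a positive weak solution $w_n\in X_0^{s,p,\alpha}(\Omega)\cap L^\infty(\Omega)$ by Schauder's fixed point theorem exactly as in Lemma \ref{existence}; here the extra term $T_n(w_n^r)$ is bounded (by $n$), so it does not disturb the fixed-point argument, and the $L^\infty$ bound follows again from Lemma 4.3 of \cite{Abdellaoui 1}. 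Uniqueness of $w_n$ for fixed $n$ follows from the monotonicity argument of Claim 3 in Lemma \ref{existence}, using that $v\mapsto \frac{f_n}{(v+1/n)^\gamma}+T_n(v^r)$ is non-increasing when $r<0$; when $0\le r<p-1$ one instead invokes the comparison Lemma \ref{comparison} with $H(x,v)=\frac{f_n}{(v+1/n)^\gamma}+T_n(v^r)+g_n$, noting $H(x,v)/v^{p-1}$ is decreasing since $\gamma<1$ and $r<p-1$.

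Next I would reproduce the a priori estimates of Lemmas \ref{a priori} and \ref{bounded}. The key point is that the singular part $\frac{f_n}{(w_n+1/n)^\gamma}$ is handled precisely as in \eqref{entropy1}: testing with $T_k(w_n)$ and using $\gamma<1$ gives $\int_{\{w_n+1/n<M\}}\frac{f_n T_k(w_n)}{(w_n+1/n)^\gamma}\le K_1 M^{1-\gamma}\int_\Omega f$, a bound independent of $n$ and $k$. For the new term $T_n(w_n^r)$ with $r<p-1$, testing with $T_k(w_n)$ yields $\int_\Omega T_n(w_n^r)T_k(w_n)\le k\int_\Omega w_n^r$, and since by the Marcinkiewicz estimate $\{w_n\}$ is bounded in $L^{m_1}(\Omega,d\nu)$ for every $m_1<\frac{N(p-1)}{N-sp}$, and $r<p-1<\frac{N(p-1)}{N-sp}$, this term is controlled uniformly in $n$ (here one also uses the continuous embedding $L^1(\Omega,d\nu)\hookrightarrow L^1(\Omega)$). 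Hence \eqref{truncation bound} holds with a constant independent of $n$, so $\{T_k(w_n)\}$ is bounded in $X_0^{s,p,\alpha}(\Omega)$, $\{w_n\}$ is bounded in $M^{\frac{N(p-1)}{N-sp}}(\Omega,d\nu)$, $\{|w_n|^{p-2}w_n\}$ is bounded in $L^{m_2}(\Omega,d\nu)$ for $m_2<\frac{N}{N-sp}$, and the fractional estimate \eqref{1st 1st} follows verbatim via \eqref{1st 1}--\eqref{1st 2}. Monotonicity $w_n\uparrow w$ and the lower bound $w_n\ge C_\omega>0$ on compact subsets follow as in Lemma \ref{a priori}, using the eigenfunction comparison $c\phi_1\le w_1$ and $h(\phi_1)=\phi_1^{-\gamma}\in L^1(\Omega)$ from Lazer--McKenna \cite{lazer}.

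With these estimates in hand, Lemma \ref{strong} applies (since $w_n$ is a positive increasing sequence with $(-\Delta)^s_{p,\alpha}w_n\ge 0$ and $\{T_k(w_n)\}$ uniformly bounded), giving $w_n\uparrow w$ a.e., $T_k(w)\in X_0^{s,p,\alpha}(\Omega)$, and $T_k(w_n)\to T_k(w)$ strongly in $X_0^{s,p,\alpha}(\Omega)$. To verify \eqref{condition 1}, test with $T_1(G_l(w_n))$ and argue as in \eqref{entropy 3}--\eqref{entropy 4}: the right side is bounded by $\int_{\{w_n\ge l\}}\big(f l^{-\gamma}+w_n^r+g_n\big)$, which tends to $0$ as $l\to\infty$ because $f l^{-\gamma}\in L^1$, $g\in L^1$, and $w^r\in L^1(\Omega)$ by the Marcinkiewicz bound together with $r<p-1<\frac{N(p-1)}{N-sp}$; Fatou then transfers the estimate to $w$. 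To verify \eqref{condition}, test with $T_k(w_n-\varphi)$, split $I_n(T_k(w_n-\varphi)(x)-T_k(w_n-\varphi)(y))=I_{1,n}+I_{2,n}$ as in \eqref{entropy 6}, apply Fatou to the nonnegative $I_{1,n}$ and the convergence \eqref{entropy 8} to $I_{2,n}$, and pass to the limit on the right using dominated convergence for $\frac{f_n}{(w_n+1/n)^\gamma}T_k(w_n-\varphi)\to \frac{f}{w^\gamma}T_k(w-\varphi)$ (justified by $w\ge C_\omega$ on $\mathrm{supp}\,\varphi$ together with the $L^1$ bound on $f w^{-\gamma}$ away from zero and near it via $\gamma<1$), strong $L^1$ convergence of $g_n$, and the uniform integrability of $T_n(w_n^r)$. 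The main obstacle is exactly this last passage to the limit in the nonlinear terms: one must show $fw^{-\gamma}\in L^1(\Omega)$ globally — which again reduces to the Lazer--McKenna lower barrier $c\phi_1\le w$ and the integrability of $\phi_1^{-\gamma}$ for $\gamma<1$ — and show $\{T_n(w_n^r)\}$ is equi-integrable, which follows from the $M^{\frac{N(p-1)}{N-sp}}(\Omega,d\nu)$ bound since $r$ is strictly below the Marcinkiewicz exponent, so no concentration occurs. Once these two points are settled, \eqref{condition} holds for every $\varphi\in W^{s,p,\alpha}_0(\Omega)\cap L^\infty(\Omega)$ and $k>0$, and $w\in\mathcal{T}_0^{s,p,\alpha}(\Omega)$ is the desired positive entropy solution.
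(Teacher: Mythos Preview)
Your overall strategy is right, but there is a genuine gap in the a priori estimate step. When you test \eqref{ent special approx} with $T_k(w_n)$ and bound the new term by $\int_\Omega T_n(w_n^r)T_k(w_n)\le k\int_\Omega w_n^r$, you then invoke ``the Marcinkiewicz estimate $\{w_n\}$ is bounded in $L^{m_1}(\Omega,d\nu)$'' to conclude that $\int_\Omega w_n^r$ is uniformly bounded in $n$. But that Marcinkiewicz bound is itself a \emph{consequence} of the inequality $\|T_k(w_n)\|_{X_0^{s,p,\alpha}}^p\le C_1 k$ with $C_1$ independent of $n$, which is precisely what you are trying to prove at this point. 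The argument is circular: without an independent uniform bound on $\int_\Omega w_n^r$, \eqref{truncation bound} does not follow with a constant independent of $n$, and none of the subsequent compactness conclusions are available.

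The paper breaks this circularity by a separate blow-up argument: one shows directly, by contradiction, that $\{w_n\}$ is bounded in $L^{p-1}(\Omega)$. Assuming $M_n:=\|w_n\|_{L^{p-1}(\Omega)}^{p-1}\to\infty$, the rescaled functions $\bar w_n=M_n^{-1/(p-1)}w_n$ satisfy $(-\Delta)^s_{p,\alpha}\bar w_n=H_n$ with $\|H_n\|_{L^1(\Omega)}\to 0$ (here $r<p-1$ makes the power term $M_n^{(r-p+1)/(p-1)}\bar w_n^r$ vanish in $L^1$, and the singular and datum parts are divided by $M_n$). The Lemma~\ref{bounded} machinery then gives $T_k(\bar w_n)\to 0$ in $X_0^{s,p,\alpha}(\Omega)$, while Vitali yields $\|\bar w\|_{L^{p-1}}=1$, a contradiction. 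Once $\sup_n\|w_n\|_{L^{p-1}(\Omega)}<\infty$ is established, H\"older gives $\sup_n\int_\Omega w_n^r<\infty$ (since $r<p-1$), and from that point on your outline---the truncation bounds, Lemma~\ref{strong}, the verification of \eqref{condition 1} and \eqref{condition}---goes through exactly as you wrote.
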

\begin{proof}
Denote $f_n=T_n(f)$ and let $\{g_n\}\subset L^\infty(\Omega)$ be a nonnegative, increasing sequence converging strongly to $g$ in $L^1(\Omega)$. Consider the following approximating problem: 
\begin{equation}\label{ent special approx}
\begin{split}
(-\Delta)_{p,\alpha}^sw_n&= \frac{f_n(x)}{(w_n+1/n)^\gamma}+w_n^r+g_n(x) ~\text{in}~\Omega,\\
w_n&>0~\text{in}~\Omega,\\
w_n&= 0~\text{in}~\mathbb{R}^N\setminus\Omega.
\end{split}
\end{equation}
The corresponding energy functional is given by 
\begin{align}
\Psi(w_n)&=\frac{1}{p}\int_{Q}|w_n(x)-w_n(y)|^p d\mu-\frac{1}{1-\gamma}\int_{\Omega}f_n\left((w_n^++1/n)^{1-\gamma}-(1/n)^{1-\gamma}\right)\nonumber\\&~~~~-\frac{1}{r+1}\int_{\Omega}(w_n^+)^{r+1}-\int_\Omega g_n w_n^+.
\end{align}
The functional $\Psi$ is $C^1$, coercive and weakly lower semi-continuous in $X_0^{s,p,\alpha}(\Omega)$. Thus, there exists a critical point of $\Psi$ and hence a weak solution  to the problem $\eqref{ent special}$, denoted by $w_n\in X_0^{s,p,\alpha}(\Omega)$. The strong maximum principle [Lemma 2.3, \cite{Mosconi}] and the comparison principle, i.e. Lemma $\ref{comparison}$, imply that the solution $w_n>0$ is unique and the sequence $(w_n)$ is an increasing sequence with respect to $n$.\\
Clearly, by Lemma $\ref{comparison}$, we obtain $u_n\leq w_n$ in $\Omega$ for every $n$, where $u_n$ is the positive weak solution to $\eqref{ent approx}$ with $h(t)=t^{-\gamma}$. According to Lemma $\ref{a priori}$, $u_n^{-\gamma}f\in L^1(\Omega)$ and hence $w_n^{-\gamma}f\in L^1(\Omega)$. \\
\textbf{Claim:} the sequence $\{w_n\}$ is uniformly bounded in $L^{p-1}(\Omega)$.\\
We will use the method of contradiction to prove our claim. Assume that $\{w_n\}$ is an unbounded sequence in $L^{p-1}(\Omega)$. Hence, there exists a subsequence of $\{w_n\}$, still denoted as $\{w_n\}$, such that $M_n=\|w_n^{p-1}\|_{L^1(\Omega)}\rightarrow\infty$ as $n\rightarrow\infty$. Let us define 
$$\bar{w}_n=\frac{w_n}{M_n^{\frac{1}{p-1}}}.$$
This implies $\|\bar{w}_n\|_{L^{p-1}(\Omega)}=1.$ Since $w_n$ is a solution to $\eqref{ent special approx}$, $\bar{w}_n$ satisfies 
\begin{equation}\label{contradiction }
\begin{split}
(-\Delta)_{p,\alpha}^s\bar{w}_n&=  \frac{f_n(x)}{M_n({w_n}+1/n)^\gamma}+M_n^{\frac{r-p+1}{p-1}}\bar{w}_n^r+M_n^{-1}g_n(x) ~\text{in}~\Omega,\\
\bar{w}_n&>0~\text{in}~\Omega,\\
\bar{w}_n&= 0~\text{in}~\mathbb{R}^N\setminus\Omega.
\end{split}
\end{equation}
Denote $H_n= \frac{f_n}{M_n({w_n}+1/n)^\gamma}+M_n^{\frac{r-p+1}{p-1}}\bar{w}_n^r+M_n^{-1}g_n$. Then, it is simple to prove that $\|H_n\|_{L^1(\Omega)}\rightarrow 0$ as $n\rightarrow\infty$. Following the proof of Lemma $\ref{bounded}$, we establish that $\{\bar{w}_n\}$ is bounded in $L^{m_1}(\Omega,d\nu)$ for every $m_1<\frac{N(p-1)}{N-sp}$ and also bounded in $X_0^{t,m,\alpha}(\Omega)$ for every $1\leq m<\frac{N(p-1)}{N-s}$, $0<t<s$. Thus, there exists $\bar{w}\in X_0^{t,m,\alpha}(\Omega)\cap L^{m_1}(\Omega, d\nu)$ such that $T_k(\bar{w})\in X_0^{s,p,\alpha}(\Omega) $  and $T_k(\bar{w}_n)\rightarrow T_k(\bar{w})$ weakly in $X_0^{s,p,\alpha}(\Omega)$. We now use the Vitali's lemma to prove $\bar{w}_n^{p-1}\rightarrow \bar{w}^{p-1}$ strongly in $L^1(\Omega)$ and we obtain $\|\bar{w}\|_{L^{p-1}}(\Omega)=1$.\\
On readopting the proof of Lemma $\ref{bounded}$ with the test function $T_k(\bar{w}_n)$ we obtain $$\|T_k(\bar{w}_n)\|_{X_0^{s,p,\alpha}(\Omega)}\rightarrow 0,~\text{as}~n\rightarrow\infty.$$
Thus, $T_k(\bar{w})=0$ for every $k$ and this contradicts to $\|\bar{w}\|_{L^{p-1}(\Omega)}=1$. This proves the claim and $\{w_n\}$ is uniformly bounded in $L^r(\Omega)$.\\
We can now guarantee the existence of a weak solution $w\in X_0^{t,m,\alpha}(\Omega)$ to the problem $\eqref{ent special}$ for every $1\leq m<\frac{N(p-1)}{N-s}$, $0<t<s$. Further, $T_k({w_n})\rightarrow T_k({w})$ weakly in $X_0^{s,p,\alpha}(\Omega)$. The proof follows verbation of the proofs in Lemma $\ref{bounded}$ and Theorem $\ref{exist weak}$.\\
We have already shown that $\{w_n\}$ is an increasing sequence. Hence, by Lemma $\ref{strong}$, $T_k({w_n})$ converges strongly to $T_k({w})$ in $X_0^{s,p,\alpha}(\Omega)$. Proceeding on the similar lines used in the proof of Theorem $\ref{exist entropy}$, we get the existence of an entropy solution $w$ to $\eqref{ent special}$ in the sense of Definition $\ref{entropy special}$. Hence the proof.
\end{proof}
	\section*{Acknowledgement}
 The author Akasmika Panda thanks the financial assistantship received from the Ministry of Human Resource Development (M.H.R.D.), Govt. of India. The author Debajyoti Choudhuri thanks SERB for the financial assistanceship received to carry out this research vide MTR/2018/000525. Both the authors also acknowledge the facilities received from the Department of Mathematics, National Institute of Technology Rourkela.
 \section*{Data Availability statement}
 \noindent The article has no data associated to it whatsoever.
 \section*{Conflict of interest statement}
 \noindent There is no conflict of interest whatsoever.
 
\end{document}